\newtheorem{theorem}{\bf Theorem}[section]
\newtheorem{proposition}{\bf Proposition}[section]
\newtheorem{lemma}{\bf Lemma}[section]
\newtheorem{remark}{\bf Remark}[section]
\providecommand{\keywords}[1]{\textbf{\textit{Keywords: }} #1}
\def\RR{\mathbb R}
\def\EE{\mathbb E}
\def\DH{{\mathcal{D}}}
\def\LL{L^1(\DH)}
\def\LH{L^1(\DH)}
\def\LHBi{L^1(\DH;L^2(\Omega))}
\def\f{u}
\def\e{\varepsilon}
\def\var{{\rm Var}}
\def\cov{{\rm Cov}}
\def\be{\begin{equation}}
\def\ee{\end{equation}}
\def\bea{\begin{eqnarray}}
\def\eea{\end{eqnarray}}
\begin{document}

\title{Multi-Order Monte Carlo IMEX hierarchies for uncertainty quantification in multiscale hyperbolic systems}

\author[$\dagger$]{Giulia Bertaglia \footnote{Corresponding author. Email address: \textit{giulia.bertaglia@unife.it}}}
\author[$\mathsection$,$\ddagger$]{Walter Boscheri}
\author[$\star$,$\ddagger$]{Lorenzo Pareschi}

\affil[$\dagger$]{\small Department of Environmental and Prevention Sciences, University of Ferrara, Ferrara, Italy}
\affil[$\mathsection$]{\small Laboratoire de Mathématiques, Université Savoie Mont Blanc, CNRS, Chambéry, France}
\affil[$\star$]{\small Maxwell Institute and Department of Mathematics, Heriot-Watt University, Edinburgh, UK}
\affil[$\ddagger$]{Department of Mathematics and Computer Science, University of Ferrara, Ferrara, Italy}

\date{March 1, 2026}
\maketitle

\begin{abstract}
We introduce a novel Multi-Order Monte Carlo approach for uncertainty quantification in the context of multiscale time-dependent partial differential equations. The new framework leverages Implicit-Explicit Runge–Kutta time integrators to satisfy the asymptotic-preserving property across different discretization orders of accuracy. In contrast to traditional Multi-Level Monte Carlo methods, which require costly hierarchical re-meshing, our method constructs a multi-order hierarchy by varying both spatial and temporal discretization orders within the Monte Carlo framework. This enables efficient variance reduction while naturally adapting to the multiple scales inherent in the problem ensuring asymptotic consistency.
The proposed method is particularly well-suited for hyperbolic systems with stiff relaxation, kinetic equations, and low Mach number flows, where standard Multi-Level Monte Carlo techniques often encounter computational challenges. Numerical experiments demonstrate that the novel Multi-Order Monte Carlo approach achieves substantial reduction of both error and variance while maintaining asymptotic consistency in the asymptotic limit.
\end{abstract}

\keywords{Uncertainty Quantification, Multi-Order Monte Carlo, multiscale hyperbolic problems, Asymptotic-Preserving schemes, IMEX Runge–Kutta schemes}

\tableofcontents

\section{Introduction}
Scientific computing has become a central tool across many research fields for analyzing the dynamics of complex systems. This is especially true in engineering and biological applications, where experimental studies are often time-consuming, costly, and difficult to replicate.
The primary goal of numerical simulations is to predict physical phenomena governed by mathematical models, typically formulated as ordinary differential equations (ODEs) or, more commonly, partial differential equations (PDEs). These models rely on simplifying assumptions that preserve their validity while enabling tractable representations of complex behaviors.
Numerous deterministic numerical methods have been developed to solve such models efficiently, with well-characterized and controllable numerical errors.
However, these numerical techniques generally assume precise knowledge of the initial and boundary conditions, the computational domain, and all model parameters. In practice, this assumption is rarely met due to limitations in measurement capabilities, observational biases, and incomplete understanding of the underlying processes.
Some uncertainties, known as epistemic uncertainties, stem from incomplete knowledge of the system and can be reduced through better measurements or more advanced modeling techniques. Others, known as aleatoric or random uncertainties, are intrinsic to the system and may be irreducible.
Therefore, any realistic numerical approach must account for these sources of uncertainty, while also recognizing the limitations inherent in both the models and the numerical methods used to solve them.

A vast body of literature is devoted to uncertainty quantification (UQ) techniques for systems governed by ODEs and PDEs. In recent years, this area has seen significant growth \cite{xiu2010,jin2017,pareschi2020}, largely driven by three main factors:
(i) the increased availability of data due to technological advances;
(ii) the rapid development of high-performance computing tools;
(iii) and the construction of increasingly sophisticated numerical algorithms.
Among the available UQ methods, Monte Carlo (MC) and Multi-Level Monte Carlo (MLMC) approaches have gained widespread popularity due to their robustness, non-intrusiveness, and suitability for parallel computing, even in high-dimensional settings \cite{caflisch1998,heinrich2001,giles2008,barth2011,menhorn2024}. Their non-intrusiveness is especially attractive in the context of non-linear hyperbolic systems, where intrusive methods like the stochastic Galerkin approach often compromise critical properties such as hyperbolicity and positivity of the solution \cite{pareschi2020}.

Despite their robustness, MC methods are known for their slow convergence, which can lead to prohibitively high computational costs, particularly when each model evaluation involves the solution of a complex PDE. The MLMC method, initially proposed in \cite{heinrich2001} and further developed in \cite{giles2008,mishra2012,mishra2012a,sukys2013}, mitigates this drawback by combining estimators across multiple levels of mesh resolution using a telescopic sum. This hierarchical structure significantly reduces the computational burden compared to standard MC methods, and it easily adapts to the concept of multi-fidelity methods \cite{gorodetsky2020generalized,zanoni2024improved,geraci2017multifidelity}, where different mathematical models of increasing complexity can be used to control and possibly reduce the variance, hence accelerating the convergence of MC methods.

To further improve efficiency, several variance reduction techniques have been developed based on multi-order discretizations, that provide a different approach toward multi-fidelity methods. These approaches aim to balance computational cost and accuracy by organizing solution hierarchies according to polynomial degree rather than mesh refinement.
A notable example is the Multi-order Monte Carlo method introduced by Motamed and Appel\"o~\cite{motamed2018}, which employs a discontinuous Galerkin framework for UQ in hyperbolic PDEs with stochastic inputs. Their method constructs a polynomial hierarchy on a fixed mesh, reducing variance without the need for re-meshing.
Another approach is the $p$-refined Monte Carlo method by Blondeel et al.~\cite{blondeel2020}, which uses polynomial refinement within Galerkin Finite Element methods to build an efficient quasi-Monte Carlo variance reduction scheme. This strategy has shown strong performance in engineering applications.

In the field of Stochastic Collocation, Beck et al. \cite{beck2019} proposed a multi-index stochastic collocation method that extends classical sparse grids techniques by incorporating multiple discretization orders. Their method has been successfully applied to UQ in random PDEs defined on arbitrary domains.
Additionally, Grote et al. \cite{grote2022} introduced a multi-order time discretization strategy using local time-stepping for stochastic wave propagation problems, demonstrating how multi-order methods can reduce computational costs while maintaining accuracy. Other relevant contributions include Motamed’s work on a multi-order neural network surrogate model for PDE-based UQ \cite{motamed2020}, and the multi-order Monte Carlo strategies for computational fluid dynamics developed by Dürrwächter et al.~\cite{durrwachter2023}. Therefore, overall, multi-order approaches present a compelling alternative to traditional MLMC and multi-fidelity techniques, especially in scenarios where re-meshing is computationally expensive or where polynomial-based hierarchies naturally integrate with the discretization framework.

In this work, we present and analyze a novel extension of the Multi-Order Monte Carlo (MOMC) methodology tailored to multiscale problems which ensures asymptotic consistency in the stiff relaxation limits of the governing equations. We introduce AP-MOMC, the first extension of MOMC to the setting of Asymptotic-Preserving (AP) schemes for multiscale hyperbolic PDEs. This variant exploits the efficiency and flexibility of Implicit–Explicit (IMEX) Runge–Kutta methods \cite{IMEXbook,Pareschi2005}, enabling simulations of hyperbolic systems with uncertain inputs while retaining asymptotic consistency and avoiding the drawbacks of intrusive techniques. In this framework, following a bi-fidelity approach, the physical scalings and asymptotic limits of the model can be systematically incorporated as additional variance-reduction levels, thereby enriching the MOMC hierarchy and further improving efficiency. The proposed MOMC framework builds a {weighted} combination of numerical solutions obtained with different discretization orders, in the spirit of multi-fidelity Monte Carlo methods \cite{geraci2015,peherstorfer2016,peherstorfer2018,dimarco2019,dimarco2021,pareschi2022,iacomini2025,lin2025}, that are proven to be asymptotic preserving. Unlike MLMC, which requires nested meshes and a telescopic structure, MOMC admits arbitrary discretizations, making it particularly effective for complex geometries. The MOMC estimator can also be interpreted as a generalized control variate strategy, with weights chosen to minimize variance. 

The rest of this paper is structured as follows. Section 2 introduces the general problem setting and presents motivating examples. Section 3 defines the MOMC estimators and the extension to the AP and bi-fidelity setting. In Section 4 the statistical properties of the method are analyzed. Numerical results are reported in Section 5, and conclusions are drawn in Section 6.

\section{Hyperbolic relaxation systems with uncertain inputs}
We introduce the mathematical problem we are interested in considering a probabilistic framework that employs the concept of random variable in probability theory to randomize a set of deterministic parameters to real-valued independent random variables $z_1,\ldots,z_{d_z}$. All these random inputs can be collected in a single vector $z=(z_1,\ldots,z_{d_z})^T\in \Omega\subset\mathbb{R}^{d_z}$ defined in a complete probability space $(\Omega,\mathcal{A},\mathcal{P})$ consisting of a set of outcomes $\Omega$, the $\sigma$-algebra of events $\mathcal{A}$, and probability measure $\mathcal{P}$ \cite{chen2013,bertaglia2021a}. Each uncertain input $z_i$, $i=1,\ldots,d_z$, is characterized by its own probability density function (PDF) $P_i(z_i)$ and, since we assume that the random inputs identify a set of mutually independent random variables, the PDF of the random vector $z$ is defined as $P(z)=\prod_{i=1}^{d_z}P_i(z_i)$ \cite{xiu2010}.

In this framework, the solution of the problem will not only depend on the physical variables space $x\in \mathcal{D}\subset\RR^{d_x}$, $d_x \geq 1$, and time $t\in \mathbb{R}_0^{+}$, but also on the uncertain input vector $z$. 
For instance, let us consider the Jin-Xin system as prototype one-dimensional (1D) multiscale hyperbolic model with relaxation \cite{jin1995a,bertaglia2023a}:
\begin{subequations}
	\begin{align}
		\frac{\partial u}{\partial t} + \frac{\partial v}{\partial x} &= 0\,, \label{eq:JX1}\\
		\frac{\partial v}{\partial t} + a^2\frac{\partial u}{\partial x}  &= -\frac1{\e}\left( v-F(u)\right)\,,
		\label{eq:JX2}
	\end{align}
\end{subequations}
where $u=u(x,t,z)$ and $v=v(x,t,z)$, $\e$ is a small positive parameter called \emph{relaxation rate} or \emph{scaling parameter} (related to the mean free path of particles in kinetic theory \cite{pareschi2013}), and $a$ is a positive constant. If $a$ satisfies the sub-characteristic condition, $a^2 > F'(u)^2$ \cite{jin1995a}, for $\e \ll 1$, the above system is a good approximation of the conservation law
\be
\frac{\partial u}{\partial t} + \frac{\partial F(u)}{\partial x}  = 0\,.
\label{eq:cons}
\ee
Indeed, in the small relaxation limit, i.e., for $\e \to 0^+$, we get the \emph{local equilibrium} $v = F(u)$, which leads to \eqref{eq:cons}.

Notice that system \eqref{eq:JX1}-\eqref{eq:JX2} is coupled with appropriate initial conditions $u(x,0,z) = u_0(x,z)$, $v(x,0,z) = v_0(x,z) = F\left(u_0(x,z)\right)$ and boundary conditions $\mathcal{B}_{u,\partial \mathcal{D}}\left(u,x,t,z\right) = g_u(t,z)$, $\mathcal{B}_{v,\partial \mathcal{D}}\left(v,x,t,z\right) = g_v(t,z)$, 
that might also depend on the uncertain input vector, as well as modeling parameters in general.

Hyperbolic relaxation systems with uncertainty have been extensively investigated, for example in the pioneering works of Jin et al. \cite{Jin2015,Jin2016}, where stochastic Galerkin methods were used to quantify uncertainty in this class of problems. These studies highlighted the dual challenge of (i) handling stiffness in $\e$, which makes direct simulation costly, and (ii) ensuring consistency with asymptotic limits in the presence of random inputs. While intrusive polynomial chaos methods provide accuracy, they become expensive in high-dimensional settings and are difficult to couple with multiscale solvers.

In this work, instead, we adopt a non-intrusive Monte Carlo framework and propose to enhance it with multi-order variance reduction strategies combined with Asymptotic-Preserving (AP) IMEX Runge–Kutta finite volume schemes \cite{IMEXbook,Pareschi2005} of varying spatial and temporal orders of accuracy. These schemes ensure uniform accuracy across all $\e$-regimes while retaining consistency with the asymptotic limits of the model, , thereby satisfying the AP property (i.e., consistency with the asymptotic limits of the mathematical model even at the discrete level), which is of utmost importance when dealing with multiscale models \cite{jin2022}. Crucially, realizations of the random vector $z$ may correspond to very different stiffness regimes, so the AP property guarantees robust performance across the whole probability space. While a realization of the PDE system corresponds to the solution obtained for a specific choice of the random parameters, our goal is instead to compute statistical information about the quantities of interest, such as their expected values and variances.

\section{Multi-Order Monte Carlo IMEX methods}
If $z\in \Omega\subseteq \RR^{d_z}$, $d_z \geq 1$, is distributed as $P(z)$, we denote the expected value of the variable $u$ by
\be
\EE[u](x,t,z) = \int_{\Omega} u(x,t,z)P(z)\,dz.
\label{eq:E}
\ee
Furthermore, given $M$ i.i.d. samples of $z$, $z_1,\ldots , z_M$, we recall the definition of the Monte Carlo (MC) estimator for a generic random variable $u(x,t,z)$ \cite{caflisch1998}:
\be
E_M\left[u\right](x,t) = \frac1{M}\sum_{k=1}^Mu(x,t,z_k).
\label{eq:MC}
\ee

Let us assume that for the computation of each $k$-th MC realization we can choose between two distinct numerical approximations of the problem of different order of accuracy, such that one constitutes our high-accurate solution of order $L$, $u_L(x,t,z)$, while the other is the (less computationally expensive) lower accurate solution of order $(L-1)$, $u_{L-1}(x,t,z)$. Then,
the order $L$ solver is evaluated in $M_L$ random samples, while 
the order $(L-1)$ solver is evaluated in 
$$M_{L-1}=M_L(1+r) \gg M_L$$
random samples, with $r \in \mathbb{N}^+$ representing the factor of additional realizations.
Notice that, in practice, $r$ can be estimated considering the computational cost of the two numerical solvers, being $r\approx \mathcal{C}_L/\mathcal{C}_{L-1}-1$, with $\mathcal{C}_l$ computational cost of the $l$-th order solution, as already proposed in \cite{peherstorfer2016,peherstorfer2018}.

\begin{remark}
	Further investigations will be devoted to identifying the optimal choice of the number of samples at each level of the method, depending on the norm used to assess its effectiveness. This aspect is not addressed in the present work; instead, we adopt the above mentioned sampling strategy proposed in \cite{peherstorfer2016,peherstorfer2018}, which is known to provide a reliable choice.
\end{remark}

Assuming to ensure the correlation across levels (shared seeds and independence between sample groups), we can then approximate the stochastic variable with the following parameter-dependent control-variate approach \cite{dimarco2019}:
\begin{equation}
	u(x,t,z) \approx u^{MOMC}(x,t,z) = u_{L,M_L}(x,t,z) -\alpha_L\left( u_{L-1,M_L}(x,t,z) - u_{L-1,M_{L-1}}(x,t,z)\right),
	\label{eq:uMOMC}
\end{equation}
where $\alpha_L \in \mathbb{R}$ and $u_{l,M_l}$ represents the solution of $u$ obtained using the $l$-th accurate solver evaluated for $M_l$ Monte Carlo samples.
Therefore, the Multi-Order Monte Carlo estimator reads
\begin{equation}
	\mathbb{E}[u](x,t) \approx E_{M_L}\left[u_L\right](x,t) -\alpha_L\left( E_{M_L}\left[u_{L-1}\right](x,t) - E_{M_{L-1}}\left[u_{L-1}\right](x,t)\right),
	\label{eq:BOMC}
\end{equation}
where we have assumed that $\mathbb{E}[u_{L-1}](x,t)\approx E_{M_{L-1}}\left[u_{L-1}\right](x,t)$.

Notice that this is an unbiased estimator of the solution, indeed when $\alpha_L = 0$ we recover exactly the standard Monte Carlo estimator for the sole $L$-th order solution.
At the same time, the computation of the variance (coincident with the mean squared error) of the control-variate approximation follows with the Lemma below.
\begin{lemma}
	The variance of the MOMC control-variate approximation \eqref{eq:uMOMC} is
	\begin{equation}
		\var\left[u^{MOMC}\right] =\var\left[u_{L,M_L} \right] +\frac{r}{(1+r)}\left(\alpha_L^2\, \var\left[u_{L-1,M_L}\right] - 2\alpha_L \,\cov\left[ u_{L,M_L}, u_{L-1,M_L}\right]\right).
		\label{eq:var}
	\end{equation}
\end{lemma}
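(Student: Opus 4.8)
The plan is to read the symbols $u_{l,M_l}$ appearing in \eqref{eq:uMOMC} as the Monte Carlo averages $E_{M_l}[u_l]$ over i.i.d.\ draws of $z$, so that \eqref{eq:uMOMC} is precisely the estimator \eqref{eq:BOMC}, and then to exploit the nested sampling structure of the bi-fidelity construction: the $M_{L-1}=M_L(1+r)$ samples fed to the low-order solver are the same $M_L$ samples used by the high-order solver, augmented by $rM_L$ further samples drawn independently of them. Writing $\tilde E[u_{L-1}]$ for the Monte Carlo average of $u_{L-1}$ over those extra $rM_L$ samples, I would use the splitting
\begin{equation}
E_{M_{L-1}}[u_{L-1}] = \frac{1}{1+r}\,E_{M_L}[u_{L-1}] + \frac{r}{1+r}\,\tilde E[u_{L-1}],
\end{equation}
from which the correction term in \eqref{eq:BOMC} collapses to $E_{M_L}[u_{L-1}] - E_{M_{L-1}}[u_{L-1}] = \tfrac{r}{1+r}\bigl(E_{M_L}[u_{L-1}] - \tilde E[u_{L-1}]\bigr)$. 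The key structural fact is that $\tilde E[u_{L-1}]$ is independent of both $E_{M_L}[u_L]$ and $E_{M_L}[u_{L-1}]$, since it is built on a disjoint block of samples.

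I would then substitute this into $u^{MOMC}$ and expand the variance by bilinearity, obtaining three contributions. The first is $\var[u_{L,M_L}]$. The cross term is $-2\alpha_L\tfrac{r}{1+r}\,\cov\bigl[E_{M_L}[u_L],\,E_{M_L}[u_{L-1}]-\tilde E[u_{L-1}]\bigr]$, which by the independence above equals $-2\alpha_L\tfrac{r}{1+r}\,\cov[u_{L,M_L},u_{L-1,M_L}]$. The quadratic term is $\alpha_L^2\tfrac{r^2}{(1+r)^2}\,\var\bigl[E_{M_L}[u_{L-1}]-\tilde E[u_{L-1}]\bigr]$; using again independence together with the i.i.d.\ identities $\var[E_{M_L}[u_{L-1}]]=\var[u_{L-1}]/M_L$ and $\var[\tilde E[u_{L-1}]]=\var[u_{L-1}]/(rM_L)$, this variance equals $\tfrac{1+r}{r}\var[u_{L-1,M_L}]$, so the quadratic term reduces to $\alpha_L^2\tfrac{r^2}{(1+r)^2}\cdot\tfrac{1+r}{r}\,\var[u_{L-1,M_L}] = \alpha_L^2\tfrac{r}{1+r}\,\var[u_{L-1,M_L}]$.

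Summing the three pieces would give exactly \eqref{eq:var}, and the identification of this variance with the mean squared error is immediate from the unbiasedness already observed below \eqref{eq:BOMC}. The only point that requires care is the sampling bookkeeping — recognizing that the low-order estimator on $M_{L-1}$ samples must be decomposed into the shared block, which carries all the correlation with the high-order estimator, and the independent remainder — because it is precisely the independence of the remainder that makes the covariance term reduce cleanly and produces the cancellation $\tfrac{r^2}{(1+r)^2}\cdot\tfrac{1+r}{r}=\tfrac{r}{1+r}$ in the $\alpha_L^2$ coefficient. Everything else is routine manipulation of variances and covariances of sample means, carried out pointwise in $(x,t)$.
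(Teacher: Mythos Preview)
Your argument is correct, and it arrives at \eqref{eq:var} by a route that differs in organization from the paper's proof. The paper expands $\var[u^{MOMC}]$ directly into six variance/covariance terms and then invokes the nested-sample identities $\var[u_{l,M}]=M^{-1}\var[u_l]$ and $\cov[u_{l,M},u_{h,N}]=\max\{M,N\}^{-1}\cov[u_l,u_h]$ (cited from \cite{peherstorfer2016}) to collapse them; the factor $r/(1+r)$ emerges at the end from $1/M_L-1/M_{L-1}$. You instead make the nested structure explicit at the outset through the convex-combination splitting $E_{M_{L-1}}[u_{L-1}]=\tfrac{1}{1+r}E_{M_L}[u_{L-1}]+\tfrac{r}{1+r}\tilde E[u_{L-1}]$, which immediately produces the prefactor $r/(1+r)$ on the correction term and isolates the independent remainder $\tilde E[u_{L-1}]$. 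This is more self-contained --- you derive the covariance identities from first principles rather than quoting them --- and it clarifies \emph{why} the nested-sample formula holds, at the cost of committing explicitly to the assumption that the $M_L$ high-order samples form a subset of the $M_{L-1}$ low-order samples (an assumption the paper's cited formula also encodes, though less visibly). Both routes are equally valid and of comparable length.
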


\begin{proof}
	Evaluating the variance of \eqref{eq:uMOMC}, we have
	\begin{subequations}
		\begin{align*}
			\var \left[u^{MOMC}\right] &= \var\left[u_{L,M_L}\right]+\alpha_L^2\,\var\left[\left( u_{L-1,M_L} - u_{L-1,M_{L-1}}\right)\right]\\
			&- 2\alpha_L \,\cov\left[ u_{L,M_L}, \left( u_{L-1,M_{L-1}} - u_{L-1,M_{L-1}}\right)\right]\\
			&= \var\left[u_{L,M_L}\right]+\alpha_L^2\left( \var\left[u_{L-1,M_L}\right] +  \var\left[u_{L-1,M_{L-1}}\right] \right) -2\alpha_L^2\,\cov\left[ u_{L-1,M_L}, u_{L-1,M_{L-1}}\right]\\
			&-2\alpha_L\,\left(\cov\left[ u_{L,M_L}, u_{L-1,M_L} \right] - \cov\left[ u_{L,M_L}, u_{L-1,M_{L-1}}\right]\right).
		\end{align*}	
	\end{subequations}
	Since, following \cite{peherstorfer2016},
	\begin{equation}
		\var\left[u_{l,M}\right]=\frac1{M}\var\left[u_l\right],
		\label{def.VarM}
	\end{equation}
	\begin{equation}
		\cov\left[u_{l,M},u_{h,N}\right]=\frac1{\mathrm{max}\{M,N\}}\cov\left[u_l,u_l\right],
	\end{equation}
	we further obtain
	\begin{equation*}
		\var [u^{MOMC}] = \var\left[u_{L,M_L}\right]+\alpha_L^2\left( \var\left[u_{L-1,M_L}\right] -  \var\left[u_{L-1,M_{L-1}}\right] \right) -2\alpha_L\,\left( \frac1{M_L} - \frac1{M_{L-1}}\right) \cov\left[ u_L, u_{L-1} \right],
	\end{equation*}
	which, recalling that $M_{L-1}=M_L(1+r)$, leads to \eqref{eq:var}.
\end{proof}

It is now possible to compute the optimal value of $\alpha_L$ that minimizes the variance \eqref{eq:var} of the approximate solution.
\begin{theorem}
	If $\var\left[u_{L-1,M_L}\right]\ne0$, the quantity
	\begin{equation}
		\alpha_L^* = \frac{\cov\left[ u_{L,M_L}, u_{L-1,M_L}\right]}{\var\left[u_{L-1,M_L}\right]}
		\label{alpha_opt}
	\end{equation}
	minimizes the variance of $u^{MOMC}$ at the point $(x,t)$ and gives
	\begin{equation}
		\var\left[u^{MOMC}\right] =\left( 1-\frac{r}{1+r}\rho_{L,L-1}^2\right) \var\left[u_{L,M_L}\right],
		\label{var_opt}
	\end{equation}
	where $\rho_{L,L-1} \in [-1,1]$ is the Pearson correlation coefficient between the $L$-th and the $(L-1)$-th accurate solutions, i.e.,
	\begin{equation*}
		\rho_{L,L-1}^2 = \frac{\cov^2\left[ u_L, u_{L-1}\right]}{\var\left[u_L\right]\var\left[u_{L-1}\right]}.
	\end{equation*}
\end{theorem}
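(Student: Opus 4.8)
The plan is to treat the right-hand side of \eqref{eq:var} as a scalar quadratic function of the single real parameter $\alpha_L$ and minimize it by elementary one-variable calculus. First I would note that, since $r>0$ and $\var[u_{L-1,M_L}]\neq 0$ by hypothesis, the coefficient $\tfrac{r}{1+r}\var[u_{L-1,M_L}]$ of $\alpha_L^2$ in \eqref{eq:var} is strictly positive, so $\alpha_L\mapsto\var[u^{MOMC}]$ is a strictly convex parabola with a unique global minimizer. Differentiating \eqref{eq:var} in $\alpha_L$ and setting the derivative to zero gives
\[
\frac{r}{1+r}\left(2\alpha_L\,\var[u_{L-1,M_L}]-2\,\cov[u_{L,M_L},u_{L-1,M_L}]\right)=0,
\]
whose solution is precisely \eqref{alpha_opt}; the strictly positive second derivative confirms that this critical point is the minimum.

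Next I would substitute $\alpha_L=\alpha_L^*$ back into \eqref{eq:var}. The two $\alpha_L$-dependent terms collapse to $-\tfrac{r}{1+r}\,\cov^2[u_{L,M_L},u_{L-1,M_L}]/\var[u_{L-1,M_L}]$, so that
\[
\var[u^{MOMC}]=\var[u_{L,M_L}]\left(1-\frac{r}{1+r}\,\frac{\cov^2[u_{L,M_L},u_{L-1,M_L}]}{\var[u_{L,M_L}]\,\var[u_{L-1,M_L}]}\right).
\]
The only bit of genuine bookkeeping is to rewrite the bracketed ratio in terms of the underlying, non-sample-averaged quantities: invoking \eqref{def.VarM} together with the covariance scaling relation quoted in the proof of the Lemma, and using that both estimators here employ the same number $M_L$ of samples, each factor $1/M_L$ occurs twice in the numerator and twice in the denominator and cancels, so the ratio equals $\cov^2[u_L,u_{L-1}]/(\var[u_L]\var[u_{L-1}])=\rho_{L,L-1}^2$. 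This yields \eqref{var_opt}.

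Finally, the inclusion $\rho_{L,L-1}\in[-1,1]$ is just the Cauchy–Schwarz inequality for the bilinear form $\cov[\cdot,\cdot]$ on square-integrable random variables. I do not expect a real obstacle here: the argument is a routine quadratic minimization, and the only place that calls for a little care is tracking the $M_L$ factors so that the variance-reduction factor comes out cleanly in terms of the Pearson correlation of $u_L$ and $u_{L-1}$ themselves rather than of their Monte Carlo averages.
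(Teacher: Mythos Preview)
Your proposal is correct and follows essentially the same route as the paper: differentiate \eqref{eq:var} in $\alpha_L$, invoke positivity of the second derivative to confirm the unique minimum, and substitute \eqref{alpha_opt} back to obtain \eqref{var_opt}. You are in fact slightly more explicit than the paper in spelling out the $M_L$-cancellation that identifies the bracketed ratio with $\rho_{L,L-1}^2$ and in citing Cauchy--Schwarz for $\rho_{L,L-1}\in[-1,1]$.
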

\begin{proof}
	By direct differentiation of \eqref{eq:var} with respect to $\alpha_L$ we find that $\alpha_L^*$ in \eqref{alpha_opt} is the unique stationary point. The fact that $\alpha_L^*$ is a minimum follows from the positivity of the second derivative:
	\[ \frac{2r}{1+r} \var\left[u_{L-1,M_L}\right] >0.\]
	Then, substituting \eqref{alpha_opt} in \eqref{eq:var} we obtain
	\begin{equation*}
		\var\left[u^{MOMC}\right] =\var\left[u_{L,M_L}\right] -\frac{r}{(1+r)}\frac{\cov^2\left[ u_{L,M_L}, u_{L-1,M_L}\right]}{\var\left[u_{L-1,M_L}\right]},
	\end{equation*}
	which leads to \eqref{var_opt}.
\end{proof}
Notice that from \eqref{alpha_opt} we can also write
\[
\alpha_L^* = \rho_{L,L-1}\var\left[u_{L,M_L}\right]^{1/2} \,.
\]
As a consequence, we have that $\alpha_L^* \in \left[- \var\left[u_{L,M_L}\right]^{1/2},\var\left[u_{L,M_L}\right]^{1/2}\right]$.

\subsection{Higher order MOMC hierarchies}
We can easily extend the treatment to additional orders of discretization.
To estimate $\EE[u_{L-1}]$ we use $M_{L-1}\gg M_L$ samples and consider $u_{L-2}$ as control variate:
\[
\EE[u_{L-1}]\approx E_{M_{L-1}}[u_{L-1}]-\alpha_{L-1}\left(E_{M_{L-1}}[u_{L-2}]-\EE[u_{L-2}]\,.\right).
\]
Similarly, in a recursive way, we can construct estimators for the remaining expectations of the control variate $\EE[u_{L-2}],\EE[u_{L-3}],\ldots,\EE[u_2],\EE[u_1]$ using, respectively, $M_{L-2}\ll M_{L-3}\ll\ldots \ll M_2\ll M_1$ samples until 
\[
\EE[u_2]\approx E_{M_{2}}[u_2]-\alpha_{2}\left(E_{M_{2}}[u_{1}]-\EE[u_{1}]\right),
\]
where we stop with the final estimate 
\[
\EE[u_1]\approx E_{M_1}[u_1].
\]
By combining the estimators of each stage together, we obtain the recursive MOMC estimator:
\begin{eqnarray}
	\nonumber
	E_L^{\alpha}[u] &=& E_{M_L}[u_L] -\alpha_L\left(E_{M_L}[u_{L-1}]-E_{M_{L-1}}[u_{L-1}]\right.\\
	\nonumber
	&+&\alpha_{L-1}\left(E_{M_{L-1}}[u_{L-2}]-E_{M_{L-2}}[u_{L-2}]\right.\\[-.3cm]
	\label{eq:MOMC}
	\\[-.2cm]
	\nonumber
	&\ldots&\\
	\nonumber
	&+&\alpha_{2}\left.\left.\left(E_{M_{2}}[u_{1}]-E_{M_{1}}[u_{1}]\right) \ldots\right)\right),
\end{eqnarray}
where we denote with $\alpha=(\alpha_2,\ldots,\alpha_L)^T$ the vector of control parameters.

Now, if we compute the optimal values $\alpha_l^*$ independently for each recursive stage, by ignoring the errors due to the approximations of the various expectations, we obtain again \eqref{alpha_opt}.
We refer to this set of values, which avoids the solution of the linear system resulting from the consideration of all the errors of the approximations of all the expectations, as \emph{quasi-optimal}.

Note that, since the control variate $u_l$ and $u_{l-1}$ are known on the same set of samples $M_l$, the values $\alpha^*_l$ can be estimated using the following unbiased estimators:
\bea
\label{eq:varm}
{\var}_{M_l}[u_{l-1}] &=& \frac1{M_l-1}\sum_{k=1}^{M_l} \left(u_{l-1}-E_{M_l}[u_{l-1}]\right)^2,\\
\label{eq:covm}
{\cov}_{M_l}[u_l,u_{l-1}] &=& \frac1{M_l-1}\sum_{k=1}^{M_l} \left(u_l-E_{M_l}[u_l]\right)\left(u_{l-1}-E_{M_l}[u_{l-1}]\right),
\eea
which allow to compute
\be
\label{alpha_qopt}
{\alpha}_{l}^*= \frac{{\cov}_{M_l}[u_l,u_{l-1}]}{{\var}_{M_l}[u_{l-1}] },
\ee 
and therefore ${\alpha}_{l}^* \in [-{\var}_{M_l}[u_l]^{1/2},{\var}_{M_l}[u_l]^{1/2}]$.

\subsection{Asymptotic-Preserving MOMC and bi-fidelity enhancements}
\label{AP-MOMC}
A key limitation of standard MOMC is that, while it reduces variance efficiently, it does not automatically guarantee consistency with the asymptotic limits of multiscale PDEs. This is crucial in relaxation systems, where the $\varepsilon \to 0$ limit corresponds to a reduced model. To address this, we consider an Asymptotic-Preserving Multi-Order Monte Carlo (AP-MOMC) framework, which design MOMC using AP-IMEX Runge–Kutta schemes \cite{IMEXbook}. This ensures that variance reduction and asymptotic consistency at all scales are simultaneously achieved, even at the discrete level.

Beyond this, the AP-MOMC hierarchy can be further enriched by introducing bi-fidelity levels that exploit asymptotic scalings of the underlying model. For the Jin–Xin system, this means incorporating the local equilibrium \eqref{eq:cons} as the coarsest level of the hierarchy. This equilibrium can be numerically discretized using again a suitable finite volume scheme of the lowest desired order of accuracy.
The resulting bi-fidelity AP-MOMC approach preserves the key asymptotic properties of the mathematical model at the discrete level, while simultaneously benefiting from the model’s inherent scalings to enrich the hierarchical structure of the numerical method.

Therefore, the bi-fidelity AP-MOMC estimator reads
\begin{eqnarray}
	\nonumber
	E_{\tilde L}^{\alpha}[u] &=& E_{M_L}[u_L] -\alpha_L\left(E_{M_L}[u_{L-1}]-E_{M_{L-1}}[u_{L-1}]\right.\\
	\nonumber
	&+&\alpha_{L-1}\left(E_{M_{L-1}}[u_{L-2}]-E_{M_{L-2}}[u_{L-2}]\right.\\[-.3cm]
	\label{eq:APMOMC}
	\\[-.2cm]
	\nonumber
	&\ldots&\\
	\nonumber
	&+&\alpha_{2}\left(E_{M_{2}}[u_{1}]-E_{M_{1}}[u_{1}]\right.\\
	\nonumber
	&+&\alpha_{1}\left.\left.\left(E_{M_{1}}[\tilde u_{1}]-E_{M_{0}}[\tilde u_{1}]\right) \ldots\right)\right),
\end{eqnarray}
where $\tilde u_1$ identifies the solution of the asymptotic scaling obtained with the method of order 1, while $\alpha_1$ is the additional control parameter depending on the covariance between the two distinct models (full-order and reduced-order). Clearly, the reduced-order model (being computationally cheaper than the full-order model due to the smaller number of equations to solve) will be evaluated using a significantly larger number of samples, i.e., $M_0 \gg M_1$.

Notice that, for $\epsilon \to 0^+$, due to the AP property of the schemes used, we have that $E_{M_{l}}[u_{l}] \to E_{M_{l}}[\tilde u_{l}]$, $l=1,\ldots,L$, and $\alpha_1=1$. Indeed, in the asymptotic limit, the full-order coincides with the reduced-order model. Consequently, the expectation of the variable $u$ substantially reduces to \eqref{eq:MOMC} applied to the reduced-order model, except for the increased number of samples $M_0$ we can afford to use for the lowest order of the hierarchy:
\begin{eqnarray}
	\nonumber
	E_{\tilde L,0}^{\alpha}[u] &=& E_{M_L}[\tilde u_L] -\alpha_L\left(E_{M_L}[\tilde u_{L-1}]-E_{M_{L-1}}[\tilde u_{L-1}]\right.\\
	\nonumber
	&+&\alpha_{L-1}\left(E_{M_{L-1}}[\tilde u_{L-2}]-E_{M_{L-2}}[\tilde u_{L-2}]\right.\\[-.3cm]\nonumber
	\\[-.2cm]
	\nonumber
	&\ldots&\\
	\nonumber
	&+&\alpha_{2}\left.\left.\left(E_{M_{2}}[\tilde u_{1}]-E_{M_{0}}[\tilde u_{1}]\right) \ldots\right)\right).
\end{eqnarray}

A schematic representation of the AP-MOMC hierarchical structures is given in Figure \ref{APMOMC_scheme}.

\begin{figure}[htb]
	\centering
	\includegraphics[width=0.95\textwidth]{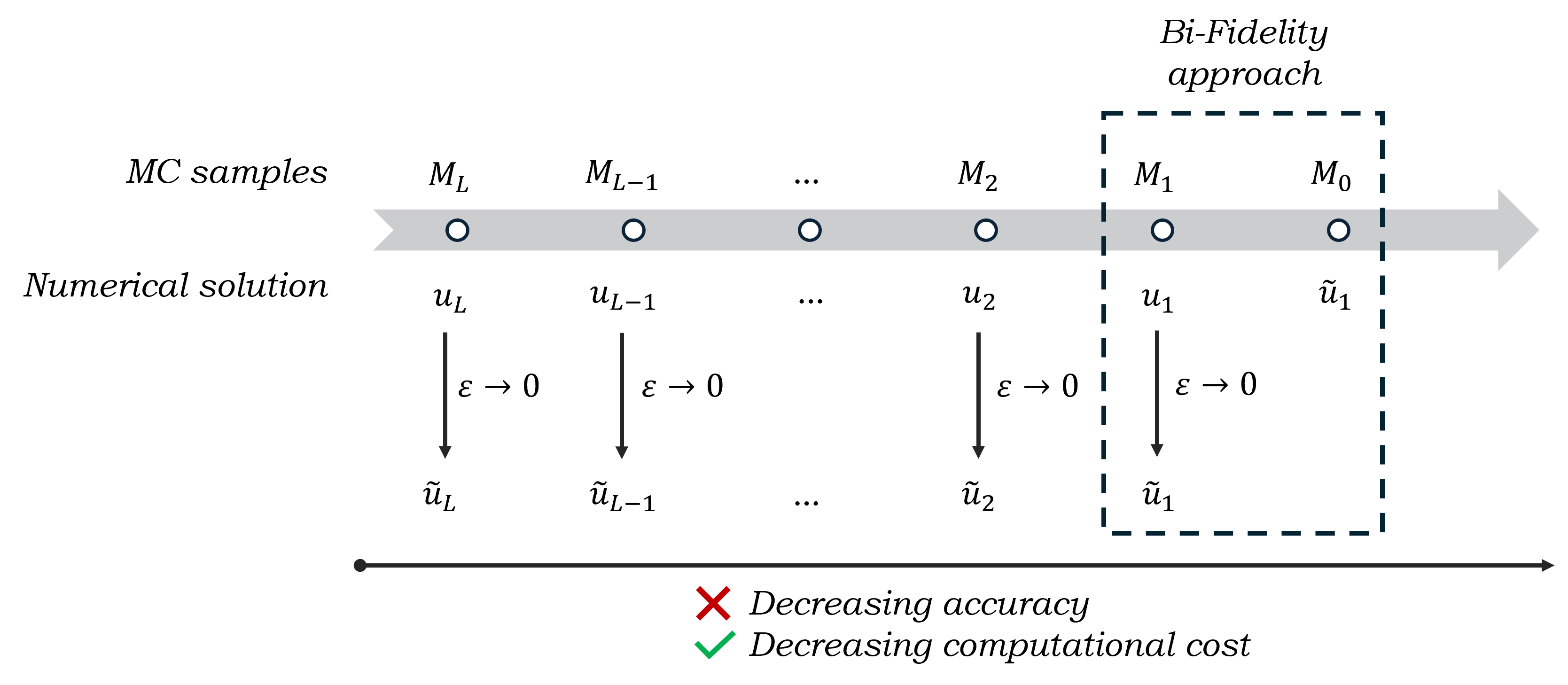}
	\caption{Schematic representation of the hierarchical structure for AP-MOMC and bi-fidelity AP-MOMC. The hierarchy is built from solutions obtained with AP-IMEX finite volume methods of different accuracy orders ($u_l$ denotes the solution of order $l$). In the bi-fidelity variant, an additional level $\tilde{u}_1$ is included, corresponding to the solution of the local equilibrium (asymptotic limit) computed with the lowest-order method, thus incorporating a bi-fidelity correction. The diagram emphasizes that both approaches remain consistent with the asymptotic limits of the multiscale dynamics, where $\varepsilon$ is the scaling parameter.}
	\label{APMOMC_scheme}
\end{figure}

\section{Error bounds}
Before going into the details of error bounds, we introduce some notations and assumptions that will be used in the sequel.

For a random variable $Z$ taking values in ${L^p(\DH)}$, we define 
\be
\|Z\|_{{L^p(\DH;L^2(\Omega))}}=\|\EE[Z^2]^{1/2}\|_{{L^p(\DH)}}.
\label{eq:norm1}
\ee
The above norm, if $p\neq 2$, differs from   
\be
\|Z\|_{L^2(\Omega;{L^p(\DH))}}=\EE\left[\|Z\|^2_{L^p(\DH)}\right]^{1/2},
\label{eq:norm2}
\ee
used for example in \cite{mishra2012a}. Note that by Jensen inequality \cite{rudin1987} we have
\be
\|Z\|_{{L^p(\DH;L^2(\Omega))}} \leq \|Z\|_{L^2(\Omega;{L^p(\DH))}}.
\ee
To avoid unnecessary difficulties, in the sequel we consider norm \eqref{eq:norm1} for $p=1$. The same results hold true for $p=2$ (the two norms coincides) whereas their extension to norm \eqref{eq:norm2} for $p=1$ typically requires $Z$ to be compactly supported. We refer to \cite{mishra2012a,dimarco2019} and \cite[Chapter 7]{jin2017} for further details.

Let us also assume that the deterministic solver for \eqref{eq:JX1}-\eqref{eq:JX2}, if the initial data $u_0$ and $v_0$ are sufficiently regular, satisfies the estimate (see \cite{dimarco2014,dimarco2019,mouhot2006,russo2012})
\be
\|u(\cdot,t^n)-u^n_l\|_{{\LL}} \leq C_l \left(\Delta x^l + \Delta t^l \right),
\label{eq:det}
\ee
where $C_l$ is a positive constant which depends on time and on the initial data, and $u^n_l$ is the computed approximation of the deterministic solution $u(x,t)$ at time $t^n$ on the computational mesh having $\Delta x$ as spatial cell size and $\Delta t$ as time step size. Again, the positive integer $l\ge1$ characterizes the accuracy of the discretizations in space-time. 

We emphasize that the estimates here presented are purposely of a general nature to illustrate the characteristics of the method, being aware that the application to specific models can clearly lead to more targeted estimates but it is outside the objectives of the present work.  

Let us now consider the error bound that we obtain using \eqref{eq:MOMC} with the sub-optimal values given by \eqref{alpha_qopt}. In the sequel, we shall discuss only the consistency error. 
We observe that if, at each level, we denote the estimator
\[
{E}^{\alpha^*_l}_{l}[\f^n_l] = E_{M_l}[\f^n_l]-{\alpha_l^{*,n}}\left(E_{M_l}[\f^n_{l-1}]-\EE[\f^n_{l-1}]\right),
\]
then, by ignoring the statistical error in estimating ${\alpha_l^{*,n}}$, we have \cite{caflisch1998}
\[
\|\EE[\f_l^n]-{E}^{\alpha_l^*}_{l}[\f^n_l]\|_{{\LHBi}} \leq  \tilde C_l\sigma^n_l
M_l^{-1/2},
\]
where $\tilde C_l>0$ is a suitable constant and we defined
\be
\sigma^n_l=\left\|\left(1-(\rho^n_{l,l-1})^2)\right)^{1/2}\var(\f^n_l)^{1/2}\right\|_{\LH}.
\ee
Using the recursive estimator, in combination with a deterministic solver satisfying \eqref{eq:det}, we can compute
\bea
\nonumber
&& \|\EE[\f](\cdot,t^n)-{E}^{\alpha^*}_{L}[\f^n]\|_{{\LHBi}} \\
\nonumber
&& \hskip 4cm \leq  
\|\EE[\f](\cdot,t^n)-\EE[\f^n_L]\|_{{\LH}} +
\|\EE[\f^n_L]-{E}^{\alpha^*}_{L}[\f^n]\|_{{\LHBi}}.
\eea 
The first term is bounded by the discretization error of the higher order scheme
\[
\begin{split}
	\|\EE[\f](\cdot,t^n)-\EE[\f^n_L]\|_{{\LH}} \leq  C_L\left(\Delta x^{L}+\Delta t^{L}\right) ,
\end{split}
\]
whereas, ignoring the statistical errors in estimating the quasi-optimal vector of values $\alpha^*_l$, the second term can be estimated recursively as
\bea
\nonumber
\|\EE[\f^n_L]-{E}^{\alpha^*}_{L}[\f^n]\|_{{\LHBi}}
&\leq& 
\|\EE[\f^n_L]-{E}^{\alpha^*_L}_{L}[\f^n_L]\|_{{\LHBi}}+\|\alpha_L^*({\EE}[\f^n_{L-1}]-{E}^{\alpha^*}_{L-1}[\f^n])\|_{{\LHBi}}\\
\nonumber
&\leq& 
\tilde C_L 
\left\{\sigma^n_L M_L^{-1/2}+
\tau^n_L\|\EE[\f^n_{L-1}]-{E}^{\alpha^*}_{L-1}[\f^n]\|_{{\LHBi}}\right\}
\\
\nonumber
& \leq& \tilde C_{L}
\left\{\sigma^n_L M_L^{-1/2}+\tau^n_L \tilde C_{L-1}\left\{\sigma^n_{L-1} M_{L-1}^{-1/2}\right.\right.\\
\label{eq:errREC}
&& + \left.\left. \|\alpha_{L-1}^*(\EE[\f^n_{L-2}]-{E}^{\alpha^*}_{L-2}[\f_{L-2}^n])\|_{{\LHBi}}\right\}\right\}
\\
\nonumber
&& \ldots\\
\nonumber
& \leq& \tilde C \left( \sum_{l=1}^L \xi^n_l \sigma^n_l M_l^{-1/2}\right),
\eea
where we defined $\xi^n_L=1$, $\sigma^n_1=\left\|\var[\f^n_1]^{1/2}\right\|_{\LH}$,
and, for $l=1,\ldots, L-1$,
\be
\xi^n_l = \prod_{j=l+1}^L \tau^n_j,\quad 
\tau^n_l=\left\|\rho^n_{l,l-1}\var[\f^n_l]^{1/2}\right\|_{\LH}.
\label{eq:muh}
\ee
Thus we have proved the following result.
\begin{proposition}
	Consider a deterministic scheme which satisfies the order assumptions \eqref{eq:det} for the solution of \eqref{eq:JX1}-\eqref{eq:JX2} with random initial data $u(x,0,z) = u_0(x,z)$, $v(x,0,z) = v_0(x,z) = F\left(u_0(x,z)\right)$. Assume that the initial data is sufficiently regular.
	Then, the recursive MOMC estimate defined in \eqref{eq:MOMC} satisfies the error bound 
	\be
	\|\EE[\f](\cdot,t^n)-{E}^{\alpha_l^*}_{L}[\f^n_L]\|_{{\LHBi}} \leq \tilde C \left(\sum_{l=1}^L \xi_l \sigma_l M_l^{-1/2}\right)+C_L\left(\Delta x^{L}+\Delta t^{L}\right)\,,
	\label{eq:errREC2}
	\ee
	where $\xi_l$ are given by \eqref{eq:muh} and
	$\tilde C>0$ depends on the final time and on the initial data. 
\end{proposition}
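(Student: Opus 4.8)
The plan is to decompose the total error via a triangle inequality into a deterministic (bias) part and a statistical (variance) part, and then to treat the statistical part by unrolling the recursive structure of the estimator \eqref{eq:MOMC} one level at a time. Concretely, one writes
\[
\|\EE[\f](\cdot,t^n)-{E}^{\alpha_l^*}_{L}[\f^n_L]\|_{{\LHBi}}
\leq
\|\EE[\f](\cdot,t^n)-\EE[\f^n_L]\|_{{\LH}}
+
\|\EE[\f^n_L]-{E}^{\alpha^*}_{L}[\f^n]\|_{{\LHBi}},
\]
observing that the first term involves only the deterministic solution so it collapses to the scalar norm \eqref{eq:norm1} being applied to a deterministic quantity, and is therefore controlled by \eqref{eq:det} with $l=L$, giving the $C_L(\Delta x^L+\Delta t^L)$ contribution.

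For the second (purely statistical) term I would proceed recursively. At the top level, split $\EE[\f^n_L]-{E}^{\alpha^*}_{L}[\f^n]$ into the single-level control-variate error $\EE[\f^n_L]-{E}^{\alpha^*_L}_{L}[\f^n_L]$ plus the term $\alpha_L^*\bigl(\EE[\f^n_{L-1}]-{E}^{\alpha^*}_{L-1}[\f^n]\bigr)$ coming from replacing the true expectation $\EE[\f^n_{L-1}]$ by its own recursive estimate. The first piece is bounded by $\tilde C_L\sigma^n_L M_L^{-1/2}$ using the standard Monte Carlo estimate quoted before the Proposition. For the second piece one pulls the (deterministic, pointwise-in-$(x,t)$) factor $\alpha_L^*$ out of the norm; since $\alpha_L^* = \rho^n_{L,L-1}\var[u^n_{L,M_L}]^{1/2}$ and $\var[u^n_{L,M_L}] = \var[u^n_L]/M_L \le \var[u^n_L]$, this factor is bounded in absolute value by $\bigl\|\rho^n_{L,L-1}\var[\f^n_L]^{1/2}\bigr\|$ — up to reabsorbing constants, exactly the quantity $\tau^n_L$ defined in \eqref{eq:muh}. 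Iterating this split down through levels $L-1, L-2, \ldots, 1$ produces the telescoping product $\xi^n_l = \prod_{j=l+1}^L \tau^n_j$ multiplying each $\sigma^n_l M_l^{-1/2}$, with the recursion terminating at level $1$ where $\EE[\f^n_1]$ is estimated by the plain MC estimator $E_{M_1}[\f^n_1]$ (so $\rho$ does not enter and $\sigma^n_1 = \|\var[\f^n_1]^{1/2}\|_{\LH}$). Collecting the $L$ resulting terms and renaming the accumulated constants $\tilde C_L\tilde C_{L-1}\cdots$ into a single $\tilde C$ depending on the final time and initial data gives $\tilde C\bigl(\sum_{l=1}^L \xi_l\sigma_l M_l^{-1/2}\bigr)$, and adding the deterministic contribution yields \eqref{eq:errREC2}. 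This is essentially the computation already displayed in \eqref{eq:errREC}, so the proof is little more than a careful bookkeeping of that chain.

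The main obstacle — and the point I would be most careful about — is the handling of the random multiplicative factor $\alpha_L^*$ inside the $\LHBi$ norm \eqref{eq:norm1}. Strictly, $\alpha_L^*$ as estimated by \eqref{alpha_qopt} is itself a random quantity (it depends on the sample), so pulling it out of the norm and bounding it by its range is only legitimate after the stated simplification of \emph{ignoring the statistical error in estimating the quasi-optimal values}; I would state this assumption explicitly, note that it is consistent with the error-budget philosophy already adopted (we ``discuss only the consistency error''), and then work with the deterministic surrogate $\alpha_L^* = \rho^n_{L,L-1}\var[\f^n_L]^{1/2}$ so that the factorization of norms is exact. A secondary subtlety is that $\LHBi$ is defined via \eqref{eq:norm1} as an $L^1$-in-$x$ norm of an $L^2$-in-$\Omega$ moment, which is not a Hilbert norm; one must check that the triangle inequality and the factoring-out of $\LH$-bounded deterministic weights are valid in this norm — both hold since \eqref{eq:norm1} is a genuine norm and $\|gZ\|_{\LHBi} \le \|g\|_{L^\infty(\DH)}\|Z\|_{\LHBi}$, with the $L^\infty$ bound on the weights themselves controlled by the $\LH$ quantities $\tau^n_l$ under the regularity assumed on the data. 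Once these two points are pinned down, the remainder is the routine recursion sketched above.
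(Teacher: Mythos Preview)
Your proposal is correct and follows essentially the same approach as the paper: the triangle-inequality split into a deterministic bias controlled by \eqref{eq:det} and a statistical part, followed by the recursive unrolling of the control-variate structure level by level, is precisely the computation displayed in \eqref{eq:errREC}. You also correctly identify the two points the paper handles only formally --- treating the estimated $\alpha_l^*$ as deterministic (explicitly assumed in the paper) and factoring the pointwise weight through the $L^1(\DH;L^2(\Omega))$ norm --- so there is nothing to add.
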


\begin{remark}
	If we assume a standard MOMC estimator, which corresponds to the choice $\alpha_l=1$, $l=1,\ldots,L$, we get the estimate
	\be
	\|\EE[\f](\cdot,t^n)-{E}^{\bf{1}}_{L}[\f^n_L]\|_{{\LHBi}}  \leq \tilde C \left(\sum_{l=1}^L \xi_l \sigma_l M_l^{-1/2}\right)+C_L\left(\Delta x^{L}+\Delta t^{L}\right).
	\label{eq:errREC2b}
	\ee
\end{remark}

\begin{remark}
	The statistical estimates in this work are expressed using the norm  \eqref{eq:norm1} with $p=1$, i.e.
	$\| \cdot \|_{L^1(\DH; L^2(\Omega))}$, to accommodate possible low regularity in space, as encountered in multiscale hyperbolic problems.
	However, under suitable assumptions on the regularity of the solution, an alternative formulation using \eqref{eq:norm2} with $p=2$, i.e.
	$\| \cdot \|_{L^2(\Omega; L^2(\DH))}$, leads to simpler expressions for the MOMC variance,
	closely resembling the standard MLMC setting \cite{mishra2012,mishra2012a}. In particular, the statistical error admits a bound of the form
	\be
	\|\EE[\f^n_L]-{E}^{\alpha^*}_{L}[\f^n_L]\|_{L^2(\Omega; L^2(\DH))} \leq \hat C \left(\sum_{l=1}^L \xi_l \sigma_l M_l^{-1/2}\right).
	\label{eq:errREC3}
	\ee
\end{remark}

\section{Numerical tests}
In this section, we demonstrate the effectiveness of the proposed MOMC, AP-MOMC, and bi-fidelity AP-MOMC methods through numerical experiments on three different test cases: the inviscid Burgers' equation, the shallow water equations, and a multiscale blood flow model. The underlying numerical schemes are based on finite volume discretizations combined with the method-of-lines time integrators.

\subsection{Inviscid Burgers' equation}
Before addressing multiscale cases, to illustrate the MOMC methodology in a simplified setting, we first consider the inviscid Burgers’ equation, which is a standard prototype for nonlinear conservation laws:
\be
\nonumber
\frac{\partial u}{\partial t} + u\frac{\partial u}{\partial x} = 0\,.
\ee
We consider a Gaussian initial distribution of the state variable $u$, 
\be
\nonumber
u(x,0,z) = \frac1{\sqrt{2\pi}\sigma(z)} e^{-\frac{x^2}{2\sigma^2(z)}},
\ee
with an uncertain input $z$ affecting its variance, being $\sigma(z)=z$, $z \sim \mathcal{U}(-1,1)$. We set the physical domain $\mathcal{D}=[-5,5]$ and the final time of the simulation $t_{end} = 2.5$.

Since the equation does not present stiff terms, we solve the problem applying the proposed MOMC approach using, as deterministic solvers, the following three explicit numerical methods with increasing order of accuracy.

\begin{enumerate}
	\item Explicit Runge–Kutta scheme of order $L=1$ (corresponding to the Forward Euler) coupled with a first order Finite Volume space discretization and Godunov numerical fluxes. The computational cost of a single run of the first order solver is $C_1 = 1\times1=1$.
	
	\item Explicit Runge–Kutta scheme of order $L=2$ (corresponding to the $2^{nd}$ order Heun’s scheme) coupled with a second order Finite Volume space discretization based on a TVD reconstruction with minmod slope limiter and Godunov numerical fluxes. The computational cost of a single run of the second order solver is $C_2 = 2\times2=4$.
	
	\item Explicit Runge–Kutta scheme of order $L=3$ (corresponding to the $3^{rd}$ order Heun’s scheme) coupled with a third order Finite Volume discretization in space based on a WENO3 reconstruction of the boundary values and Godunov numerical fluxes. The computational cost of a single run of the third order solver is $C_3 = 3\times3=9$.
\end{enumerate} 

The time step size is limited by the classical $\mathrm{CFL}$ stability condition, being $\mathrm{CFL} = 0.9$, and we discretize the physical domain with 200 computational cells.

In Figure \ref{test_burg_sol} we show a reference solution computed with the standard MC method with the explicit Runge–Kutta Finite Volume solver of order 3 using $M=20000$ stochastic samples.

\begin{figure}[b!]
	\centering
	\includegraphics[width=0.48\textwidth]{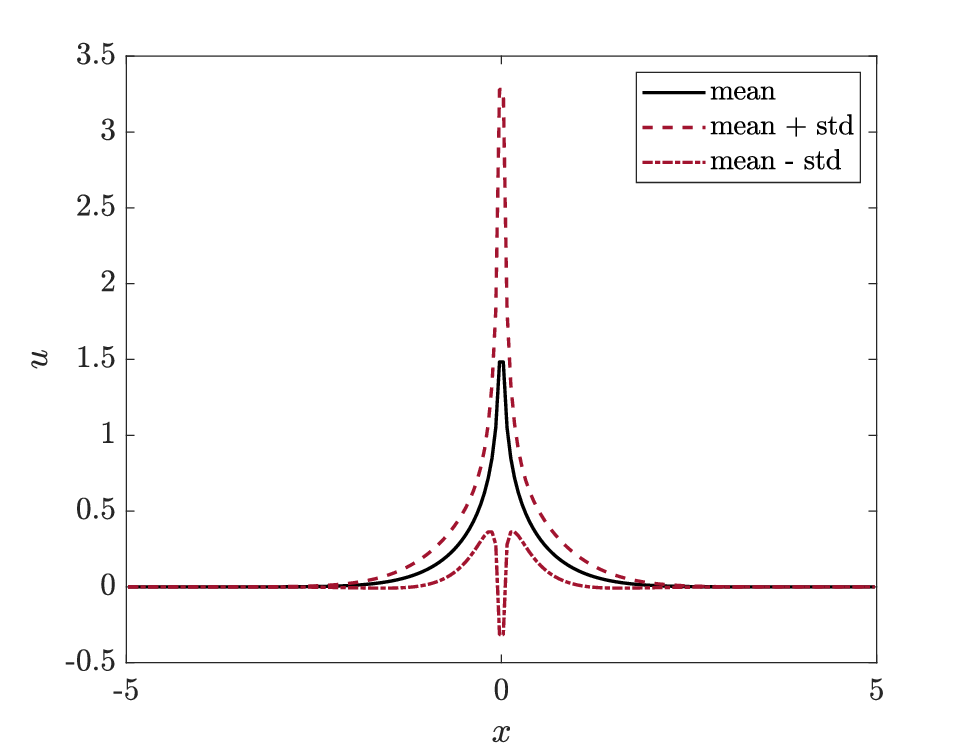}
	\includegraphics[width=0.48\textwidth]{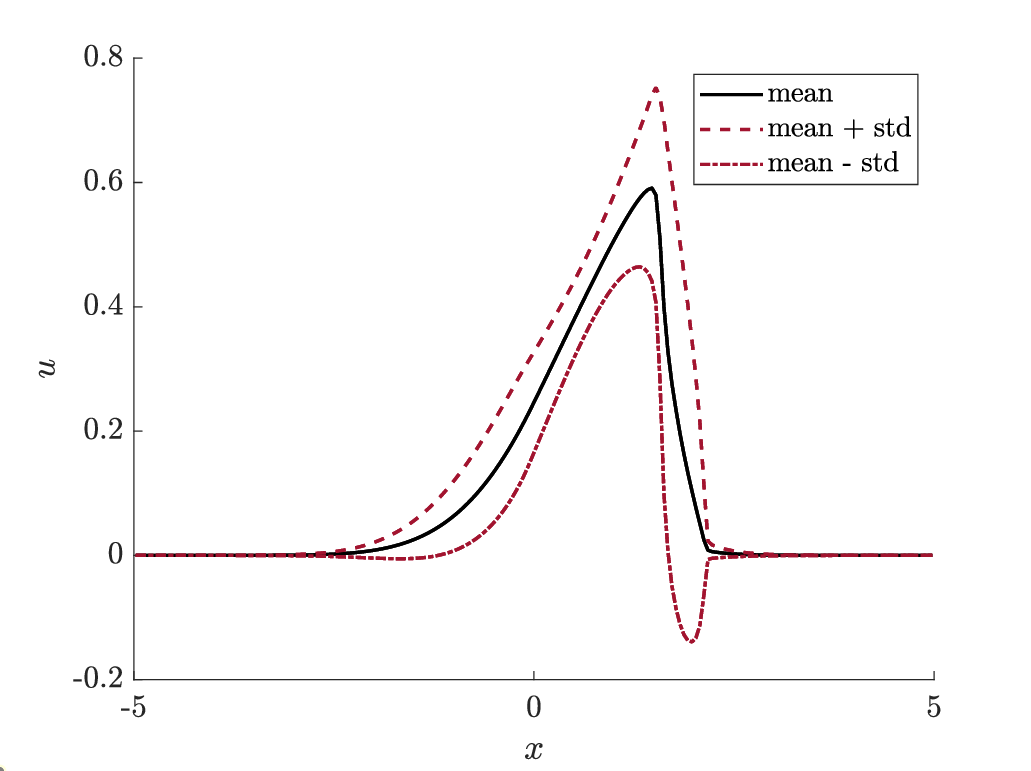}
	\caption{Burgers equation. Reference solution at the initial (left) and final (right) time of the simulation.}
	\label{test_burg_sol}
\end{figure}

\begin{figure}[th!]
	\centering
	\includegraphics[width=0.48\textwidth]{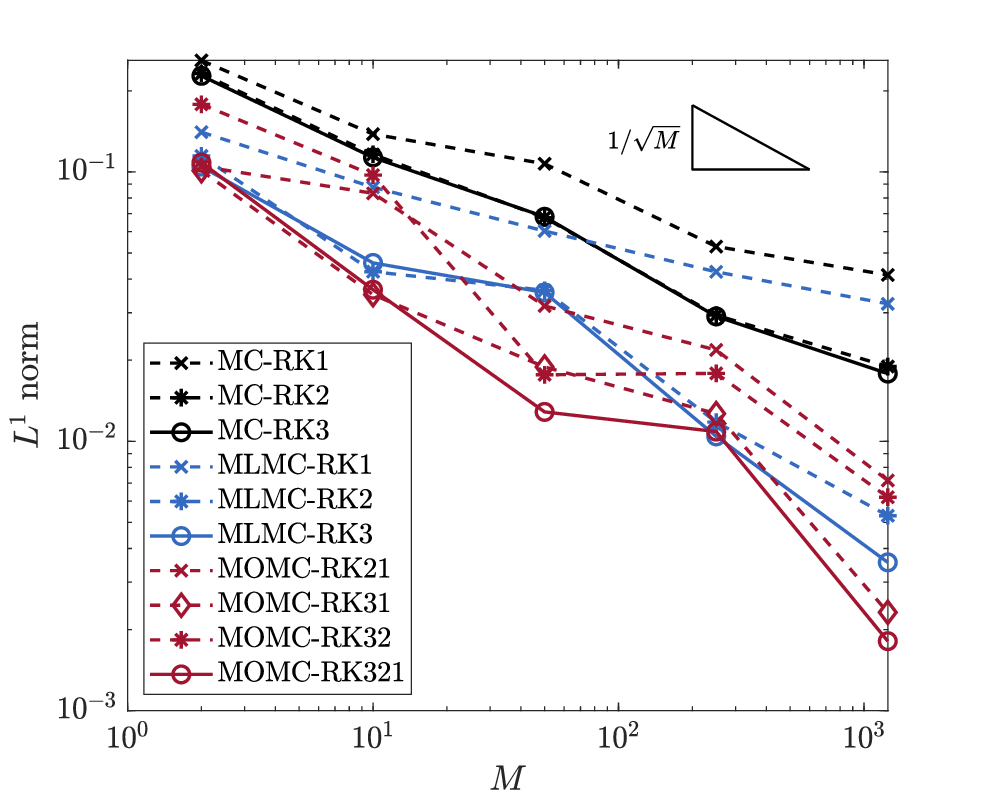}
	\includegraphics[width=0.48\textwidth]{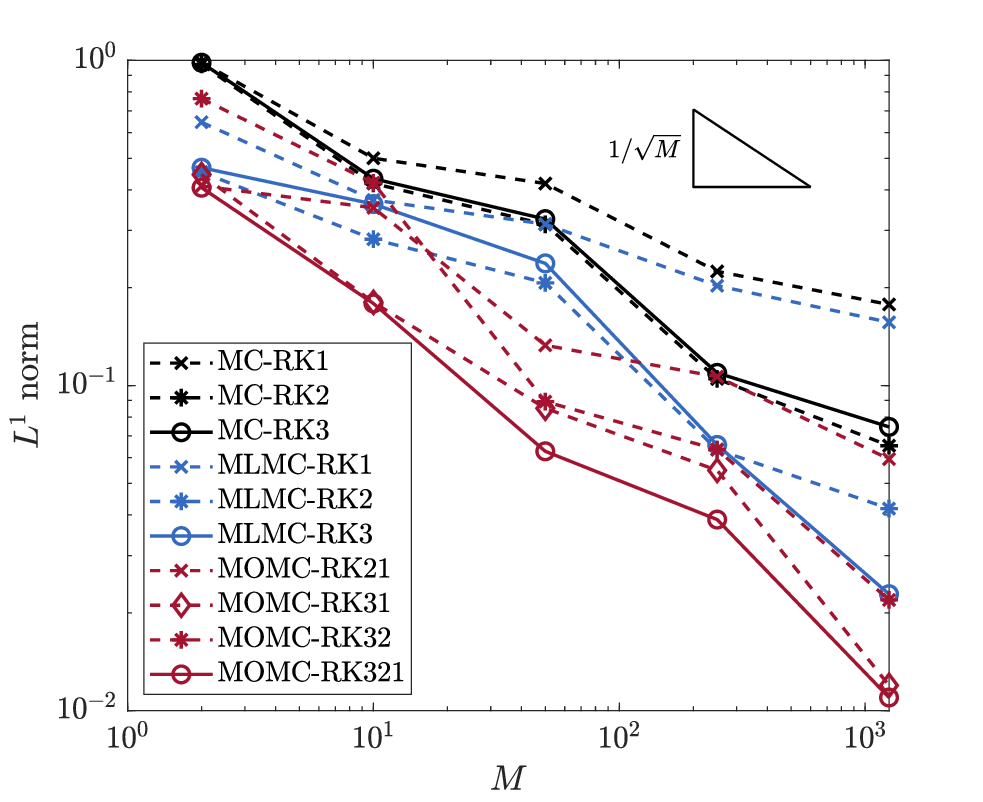}
	\caption{Burgers equation. $L^1$-norm error in the expectation (left) and in the variance (right) of the variable $u$ with respect to the number of samples $M$ used in the standard Monte Carlo (MC) methods and in the $L$-th level of the different Multi-Level MC (MLMC) and Multi-Order MC (MOMC) methods used.}
	\label{test_burg_M}
\end{figure}
\begin{figure}[th!]
	\centering
	\includegraphics[width=0.7\textwidth]{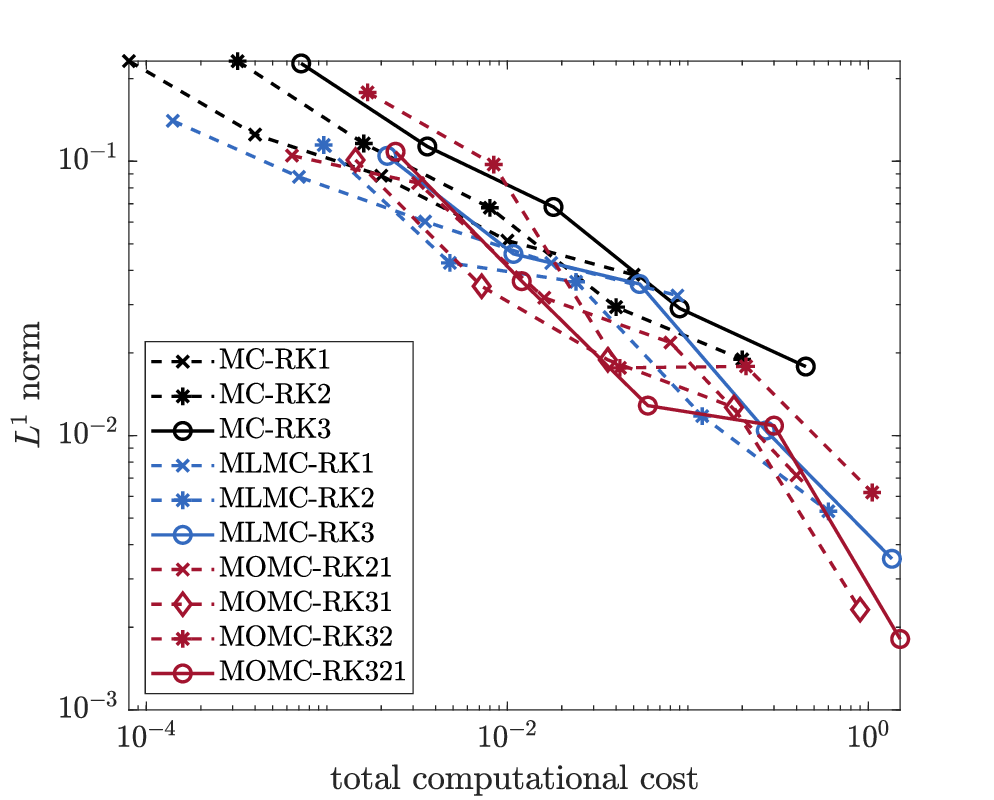}
	\caption{Burgers equation. $L^1$-norm error in the expectation of the variable $u$ with respect to the total computational cost for the different Monte Carlo (MC), Multi-Level MC (MLMC) and Multi-Order MC (MOMC) methods used.}
	\label{test_burg_tCPU}
\end{figure}

We compute the $L^1$-norm convergence curves of the solution using various versions of the proposed MOMC approach. Specifically, we consider two-level combinations of orders 2 and 1 (MOMC-RK21), orders 3 and 1 (MOMC-RK31), and orders 3 and 2 (MOMC-RK32), as well as a three-level configuration using orders 3, 2, and 1 (MOMC-RK321) of the deterministic algorithm.
We compare the MOMC results with convergence curves obtained using the standard Monte Carlo method with order 3 (MC-RK3), order 2 (MC-RK2), and order 1 (MC-RK1) schemes for each realization. Additionally, we include comparisons with a three-level Multi-Level Monte Carlo method, using order 3 (MLMC-RK3), order 2 (MLMC-RK2), and order 1 (MLMC-RK1) schemes for each realization, employing a mesh coarsening corresponding to 100 and 50 cells on the $(L-1)$ and $(L-2)$ levels, respectively. 

In Figure \ref{test_burg_M}, we present the $L^1$-norm convergence curves for the expectation and variance of the solution with respect to the number of samples $M$ used in the standard MC methods and at the $L$-th level of the different MLMC and MOMC methods. We observe that both MLMC and MOMC techniques consistently outperform the standard MC methods of the corresponding deterministic order of accuracy. Moreover, the proposed MOMC methods exhibit a clear reduction in the expectation error for the same $M$ at the highest level, along with a noticeable variance reduction.

Figure \ref{test_burg_tCPU} shows the convergence curves with respect to the total computational cost (accounting for the cumulative solver runs at the various levels) for each MC, MLMC, and MOMC configuration. This plot further validates the effectiveness of the proposed methods, demonstrating that the MOMC approaches are generally the most efficient choice for a given reasonably small error tolerance.

\subsection{Shallow water equations}
As second test problem, we consider the frictionless Shallow Water Equations (SWE) (see \cite{boscheri2023}):
\begin{subequations}
	\begin{align*}
		\frac{\partial \eta}{\partial t} + \frac{\partial (hu)}{\partial x} &= 0\,,\\
		\frac{\partial (hu)}{\partial t} + \frac{\partial \left(hu^2\right)}{\partial x} &= -  gh\frac{\partial \eta}{\partial x}\,,\\
		\frac{\partial b}{\partial t} &= 0\,,
	\end{align*}
\end{subequations}
where $\eta$ is the free surface elevation, $b$ is the bottom bathymetry, $h=\eta - b$ is the water depth, $u$ is the water velocity, and $g$ is gravity acceleration.

We now write the governing system of equations in dimensionless form to highlight the presence of stiff terms. We introduce the dimensionless variables 
\[ 
x^* = \frac{x}{\bar L}\,, \quad t^* = \frac{t}{\bar T}\,, \quad \eta^* = \frac{\eta}{\bar H}\,, \quad h^* = \frac{h}{\bar H}\,, \quad b^* = \frac{b}{\bar H}\,, \quad u^* = \frac{u}{\bar U}\,, 
\]
where $\bar L$, $\bar T$, $\bar H$, and $\bar U=\bar L/\bar T$ are the characteristic length, time, depth, and velocity, respectively. Substitution of these rescaled variables into the governing system of equations, dropping the star superscripts to ease notation, yields the following dimensionless SWE:
\begin{subequations}
	\begin{align*}
		\frac{\partial \eta}{\partial t} + \frac{\partial (hu)}{\partial x} &= 0\,,\\
		\frac{\partial (hu)}{\partial t} + \frac{\partial \left(hu^2\right)}{\partial x} &= - \frac{h}{\mathrm{Fr}^2}\frac{\partial \eta}{\partial x}\,,\\
		\frac{\partial b}{\partial t} &= 0\,.
	\end{align*}
\end{subequations}
Here, $\mathrm{Fr} = \bar{U} / \sqrt{g \bar{H}}$ is the Froude number, which measures the ratio between the convective velocity and the pressure (acoustic-gravity) wave speed. When the Froude number is very small (i.e., the acoustic wave speed dominates over the advective dynamics of the water), the system becomes stiff, highlighting the intrinsic multiscale nature of the dynamics. Fully explicit numerical methods would then require an extremely large number of small time steps to track the acoustic waves while the fluid itself moves very slowly. This motivates the need for a semi-implicit/IMEX temporal discretization of the equations (for which the stiff components are treated implicitly and all the rest is left explicit).

For the test case, in a domain $\mathcal{D}=[0,30]$, we consider a double Gaussian distribution of the water depth, with uncertain input $z$ affecting their variance, a constant null initial velocity, and a flat bottom profile:
\begin{subequations}
	\begin{align*}
		h(x,0,z) &= 1 + 0.01 e^{-\frac{(x-10)^2}{2\sigma^2(z)}} + 0.01e^{-\frac{(x-20)^2}{2\sigma^2(z)}}\,,\\
		u(x,0) &= 0\,,\\
		b(x,0) &= 0\,,
	\end{align*}
\end{subequations}
being $\sigma(z)=(1+z)$, $z \sim \mathcal{U}(0,1)$. We set the final time of the simulation $t_{end} = 1.0$.

We solve the problem applying the proposed MOMC approach considering, as deterministic solvers, the following three IMEX Finite Volume methods with increasing order of accuracy. 

\begin{figure}[b!]
	\centering
	\includegraphics[width=0.48\textwidth]{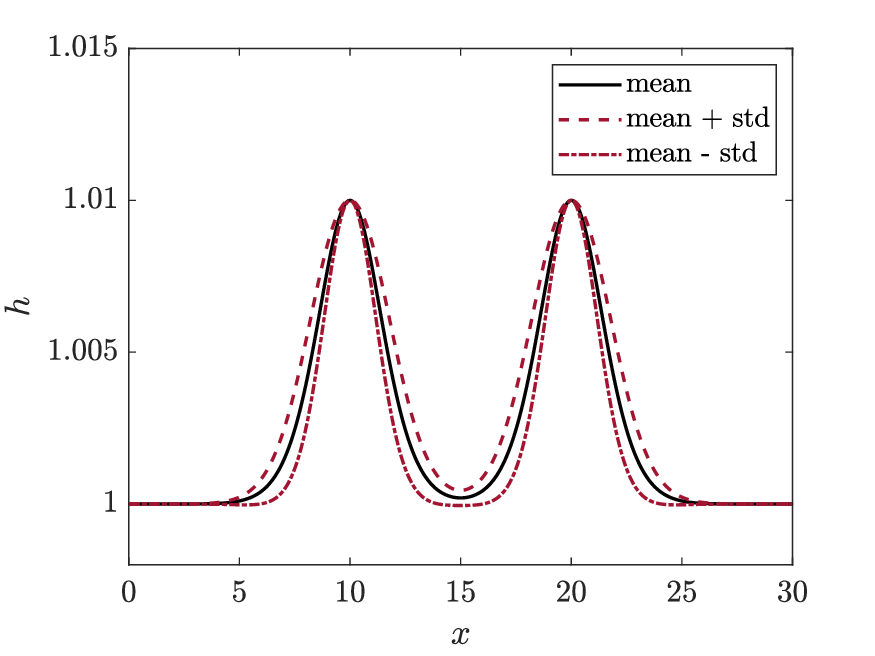}
	\includegraphics[width=0.48\textwidth]{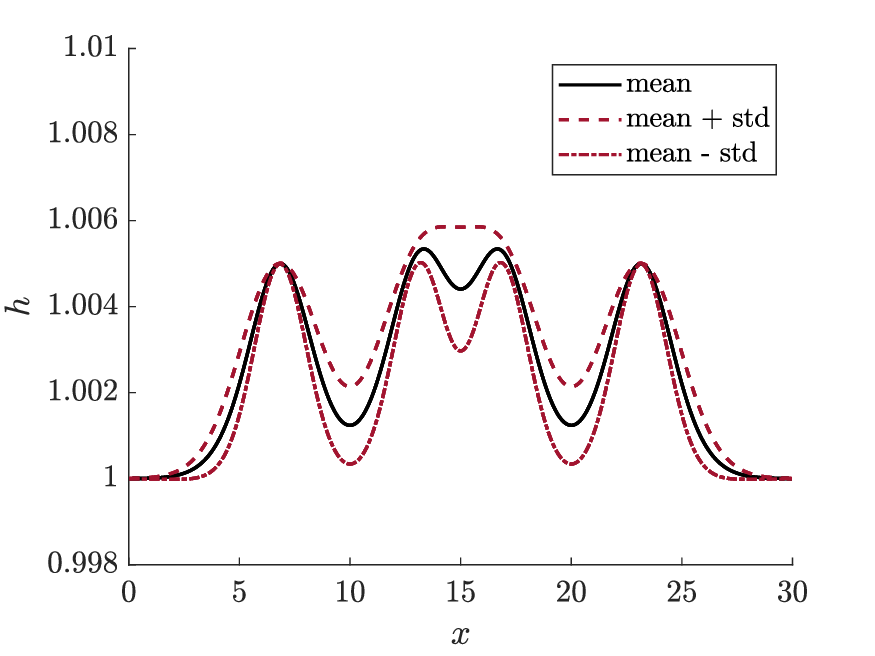}
	\caption{Shallow Water Equations. Reference solution of the water depth $h$ at the initial (left) and final (right) time of the simulation.}
	\label{test_SWE_sol}
\end{figure}
\begin{figure}[t!]
	\centering
	\includegraphics[width=0.48\textwidth]{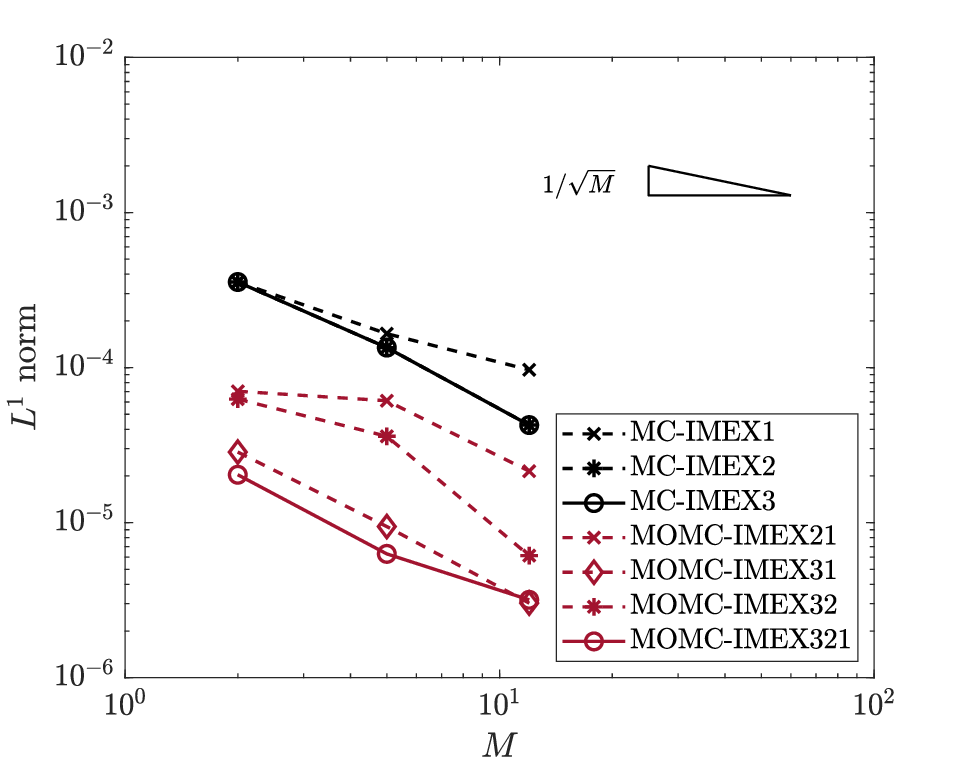}
	\includegraphics[width=0.48\textwidth]{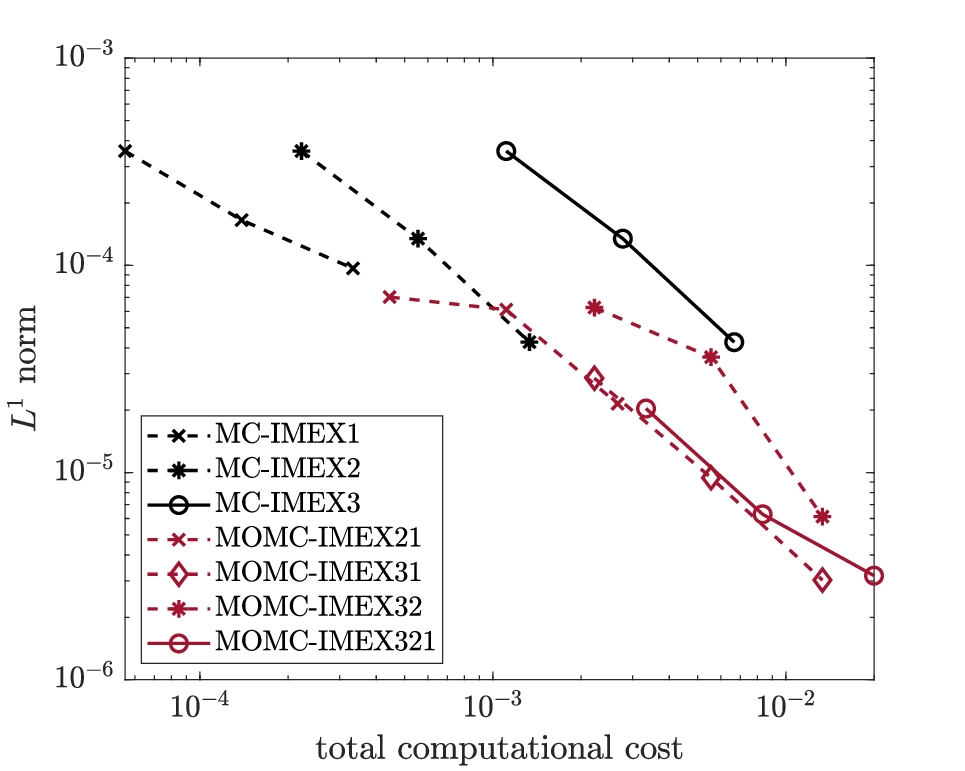}
	\caption{Shallow Water Equations. $L^1$-norm error in the expectation of the variable $h$ with respect to the number of samples $M$ used in the standard Monte Carlo (MC) methods and in the $L$-th level of the different Multi-Order MC (MOMC) methods used (left) and with respect to the total computational cost (right).}
	\label{test_SWE}
\end{figure}

\begin{enumerate}
	\item IMEX Runge–Kutta scheme of order $L=1$, namely ARS(1,1,1) \cite{ARS1997,IMEXbook} (which corresponds to a pair of forward and backward Euler), coupled with a first order Finite Volume space discretization and Rusanov numerical fluxes and Central Finite Difference discretization of the elliptic equation deriving from the resolution of the IMEX discretization of the system. The computational cost of a single run of the first order solver is $C_1 = 1\times1\times3=3$.
	
	\item L-stable IMEX Runge–Kutta scheme of order $L=2$, namely ARS(2,2,2) \cite{ARS1997,IMEXbook}, coupled with a second order Finite Volume space discretization based on a TVD reconstruction with minmod slope limiter and Rusanov numerical fluxes and Central Finite Difference discretization of the elliptic equation deriving from the resolution of the IMEX discretization of the system. The computational cost of a single run of the second order solver is $C_2 = 2\times2\times3=12$.
	
	\item L-stable Stiffly-Accurate IMEX Runge–Kutta scheme of order $L=3$, namely SI-IMEX(3,4,3) \cite{IMEXbook}, coupled with a third order Finite Volume discretization in space based on a WENO3 reconstruction of the boundary values and Rusanov numerical fluxes and a fourth order Central Finite Difference discretization of the elliptic equation deriving from the resolution of the IMEX discretization of the system. The computational cost of a single run of the third order solver is $C_3 = 4\times3\times5=60$.
\end{enumerate} 

The time step size is limited by the classical $\mathrm{CFL}$ stability condition, being $\mathrm{CFL} = 0.9$, without any restriction due to the Froude number thanks to the IMEX discretization. Finally, we discretize the physical domain with 500 computational cells.

The reader is invited to refer to \cite{IMEXbook} for further details on the IMEX numerical schemes.

\begin{remark}
	Note that the Finite Volume spatial discretizations chosen here and in the following tests represent only one possible option; alternative spatial discretizations could also be employed, since the key ingredient of the proposed methodology for multiscale systems lies in the IMEX temporal discretization.
\end{remark}

In Figure \ref{test_SWE_sol} we present a reference solution, showing the colliding Gaussian profiles, computed with the standard MC method with the chosen IMEX Runge–Kutta Finite Volume solver of order 3 using $M=300$ stochastic samples.

We compute the $L^1$-norm convergence curves of the solution using the same MOMC configurations discussed in the previous section for the inviscid Burgers' equation and compare the results with those obtained using the standard MC methods with different orders of accuracy of the deterministic solvers.

In Figure \ref{test_SWE}, we show the $L^1$-norm convergence curves for the expectation of the solution with respect to the number of samples $M$ used in the standard MC methods and at the $L$-th level of the different MOMC methods, along with convergence curves with respect to the total computational cost. As in the previous test, we observe that the various MOMC techniques offer advantages in terms of both accuracy and efficiency compared to the standard MC methods.

\subsection{Multiscale blood flow model}
As third application, we consider the multiscale blood flow model presented in \cite{bertaglia2023,bertaglia2020}. Averaging the frictionless incompressible Navier--Stokes equations over the vessel cross-section, while assuming axial symmetry in both the vessel and the flow, yields the following equations of conservation of mass and momentum augmented by the presence of the closing constitutive equation, which characterizes a viscoelastic behavior of the vessel wall \cite{bertaglia2023}:
\begin{subequations}
	\begin{align}
		\frac{\partial A}{\partial t} + \frac{\partial q}{\partial x} &= 0\,, \label{eq.contST}\\ 
		\frac{\partial q}{\partial t} + \frac{\partial}{\partial x}\left(\frac{q^2}{A}\right) + \dfrac{A}{\rho} \frac{\partial p}{\partial x} &= 0\,, \label{eq.momST}\\
		\frac{\partial p}{\partial t} + E_0 \,\mathcal{G}(A) \,\frac{\partial q}{\partial x} &= -\frac{1}{\tau_r}\left( p - p_{0} - E_{\infty}\,\mathcal{F}(A)\right) \,.\label{eq.PDE}
	\end{align}
	\label{completesyst}		
\end{subequations}
Under the assumption to treat only medium- to large-size arteries, we have \cite{bertaglia2020}
\be\nonumber
\mathcal{G}(A)=\frac{h_0 \sqrt{\pi }}{2A_0 \sqrt{A}},
\qquad
\mathcal{F}(A) =  \frac{h_0 \sqrt{\pi}}{A_0}\left( \sqrt{A} - \sqrt{A_0}\right).
\ee
Here, $A$ is the cross-sectional area of the vessel, $q$ is the averaged flow rate, $p$ is the averaged blood pressure, $\rho$ is the constant blood density. In Eq. \eqref{eq.PDE}, we can identify the three mechanical parameters that define the constitutive behavior of the vessel wall, namely: the instantaneous Young modulus $E_0$ (characterizing the instantaneous elastic response of the material when subject to load), the asymptotic Young modulus $E_\infty<E_0$ (characterizing the asymptotic response of the material subject to load, reached once the dissipation effects due to viscosity are over), and the relaxation time of the material $\tau_r$. The three parameters are linked together by the viscosity coefficient
$\eta =\tau_r E_0/(1 - \frac{E_{\infty}}{E_0})$.
Furthermore, we have the presence of the equilibrium (diastolic) pressure $p_0$, the equilibrium area $A_0$, and the constant wall thickness $h_0$.
Notice that all the variables of the system may depend on an uncertain input $z$ \cite{bertaglia2021a,colebank2024,fleeter2020,gao2020}.

The set of PDEs in \eqref{completesyst} constitutes a hyperbolic multiscale system with relaxation term, represented by the right hand side in Eq. \eqref{eq.PDE}. 
This type of mathematical models can describe phenomena characterized by different time scales (i.e., multiscale regimes) depending on the magnitude of the scaling parameters, leading to the resolution of problems that are notoriously complicated to approximate numerically due to the associated stiffness. 
To identify the terms responsible for the stiffness and to analyze the behavior of the model in the scaling (asymptotic) limits, it is convenient to write the governing equations in dimensionless form.
Hence, we fix the characteristic values for length ($\bar L$), time ($\bar T$), blood density ($\bar \rho$), cross-sectional vessel area ($\bar A$), vessel viscosity ($\bar \eta$), vessel elasticity ($\bar E = \bar \eta/\bar T$), and blood velocity ($\bar U = \bar L/\bar T$), to introduce the following dimensionless variables:
\[ 
x^* = \frac{x}{\bar L}\,, \quad t^* = \frac{t}{\bar T}\,, \quad \rho^* = \frac{\rho}{\bar \rho}\,, \quad A^* = \frac{A}{\bar A}\,, \quad q^* = \frac{q}{\bar A\bar U}\,, \quad p^* = \frac{p}{\bar \rho \bar U^2}\,,
\]
\[ 
A_0^* = \frac{A_0}{\bar A}\,, \quad p_0^* = \frac{p_0}{\bar \rho \bar U^2}\,, \quad E_0^* = \frac{E_0}{\bar E}\,, \quad E_{\infty}^* = \frac{E_{\infty}}{\bar E}\,, \quad \eta^* = \frac{\eta}{\bar \eta}\,, \quad\tau_r^* = \frac{\tau_r \bar E}{\bar \eta}\,.
\]
By omitting the star symbol of the rescaled variables for ease of reading, we finally obtain the following dimensionless form of system \eqref{completesyst}:
\begin{subequations}
	\begin{align}
		\frac{\partial A}{\partial t} + \frac{\partial q}{\partial x} &= 0\, , \label{eq.cont_adim}\\
		\frac{\partial q}{\partial t}+ \frac{\partial}{\partial x}\left(\frac{q^2}{A}\right) + \frac{A}{\rho} \, \frac{\partial p}{\partial x} &= 0 \,, \label{eq.mom_adim}\\
		\frac{\partial p}{\partial t} + \frac{E_0}{\mathsf{Re}} \mathcal{G}(A) \,\frac{\partial q}{\partial x} &= -\frac{1}{\tau_r}\left( p - p_0 - \frac{E_{\infty}}{\mathsf{Re}} \mathcal{F}(A)\right)\,, \label{eq.PDE_adim}
	\end{align}
	\label{completesyst_dimensionless}
\end{subequations}
where $\mathsf{Re} = \bar \rho \bar U \bar L / \bar \eta$ is the Reynolds number measuring the ratio between fluid inertia and viscous force of the wall (and not of the fluid, in contrast with the classical definition).

We can now observe that system \eqref{completesyst_dimensionless}, depending on the magnitude of $\tau_r$ and $E_0$, can describe different propagation dynamics. In particular, for the purpose of this work, we consider the following asymptotic behavior \cite{bertaglia2023}:
if $\tau_r\to0$ while $\eta\to0$ and $E_0$ remains finite, the vessel wall behaves in a purely elastic manner. Indeed, Eq. \eqref{eq.PDE_adim} relaxes toward the elastic tube law
$p = p_0 + \frac{E_{\infty}}{\mathsf{Re}} \mathcal{F}(A)$,
and the bio-fluid dynamics results governed by the following reduced-order hyperbolic system:
\begin{subequations}
	\begin{align}
		\frac{\partial A}{\partial t} + \frac{\partial q}{\partial x} &= 0 \,,\\
		\frac{\partial q}{\partial t} + \frac{\partial}{\partial x}\left(\frac{q^2}{A}\right) + \frac{A}{\rho} \frac{\partial}{\partial x}\left(p_0 + \frac{E_{\infty}}{\mathsf{Re}}\mathcal{F}(A)\right)&= 0 \,.
	\end{align}
	\label{completesyst_elastic}
\end{subequations}


Given the multiscale nature of the system, we solve the problems detailed in the following applying the AP-MOMC approach considering, as deterministic solvers, the following three AP-IMEX Finite Volume methods with increasing order of accuracy (the reader is referred to \cite{bertaglia2023} for further details on the numerical discretization of the system and to \cite{IMEXbook} for additional details on the IMEX numerical schemes).

\begin{enumerate}
	\item IMEX Runge Kutta scheme of order $L=1$, namely ARS(1,1,1) \cite{ARS1997,IMEXbook} (which corresponds to a pair of forward and backward Euler), coupled with a first order Finite Volume space discretization and Dumbser-Osher-Toro solver \cite{dumbser2011a} for the computation of the numerical fluxes. The computational cost of a single run of the first order solver is $C_1 = 1\times1=1$.
	
	\item L-stable IMEX Runge Kutta scheme of order $L=2$, namely ARS(2,2,2) \cite{ARS1997,IMEXbook}, coupled with a second order Finite Volume space discretization based on a TVD reconstruction with minmod slope limiter and Dumbser-Osher-Toro solver for the computation of the numerical fluxes. The computational cost of a single run of the second order solver is $C_2 = 2\times2=4$.
	
	\item L-stable Globally-Stiffly-Accurate IMEX Runge–Kutta scheme of order $L=3$, namely BPR(3,4,3) \cite{boscarino2017,IMEXbook}, coupled with a third order Finite Volume discretization in space based on a WENO3 reconstruction of the boundary values and Dumbser-Osher-Toro solver for the computation of the numerical fluxes. The computational cost of a single run of the third order solver is $C_3 = 4\times3=12$.
\end{enumerate} 

\subsubsection{Test 1: AP-MOMC}

\begin{figure}[b!]
	\centering
	\includegraphics[width=0.48\textwidth]{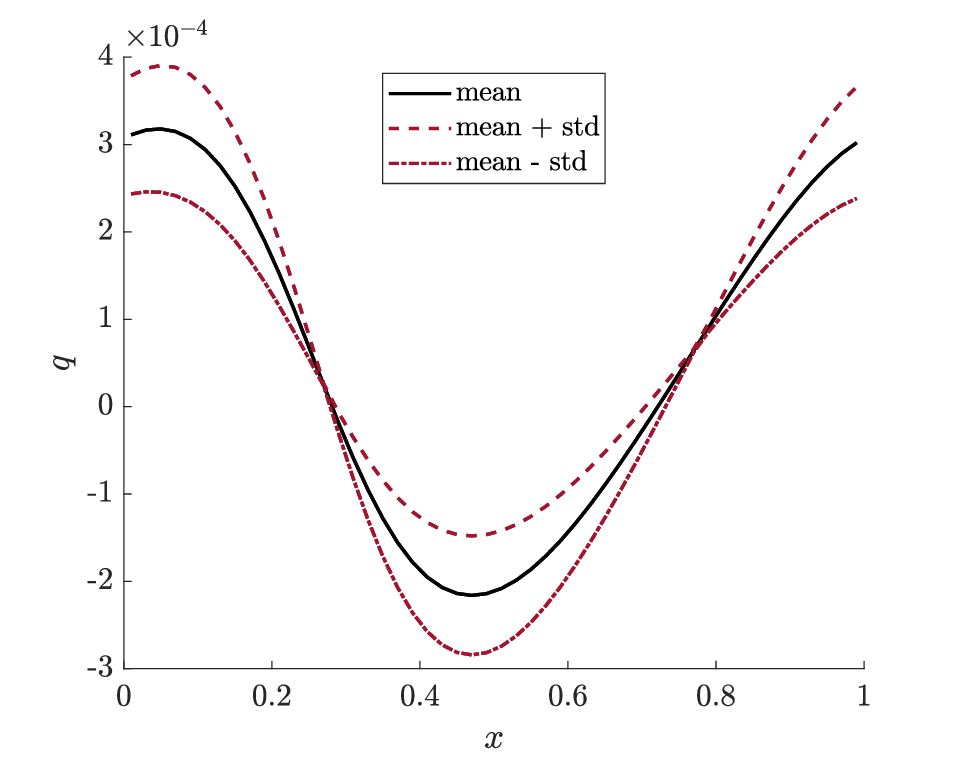}
	\includegraphics[width=0.48\textwidth]{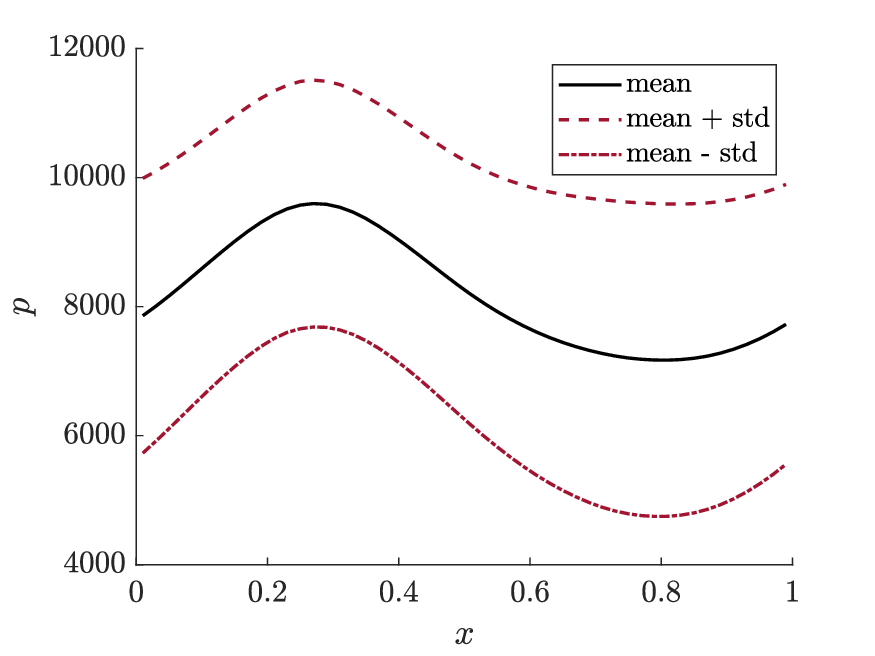}
	\caption{Blood flow model, Test 1. Reference solution at the final time of flow rate $q$ (left) and pressure $p$ (right).}
	\label{test1_BF_sol}
\end{figure}
\begin{figure}[b!]
	\centering
	\includegraphics[width=0.48\textwidth]{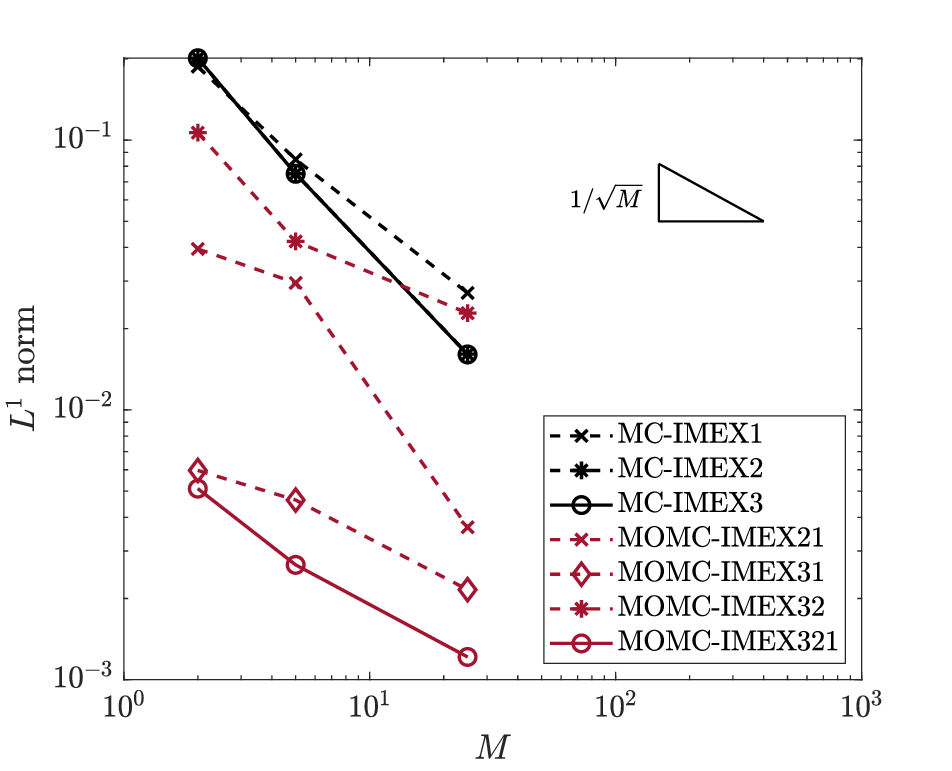}
	\includegraphics[width=0.48\textwidth]{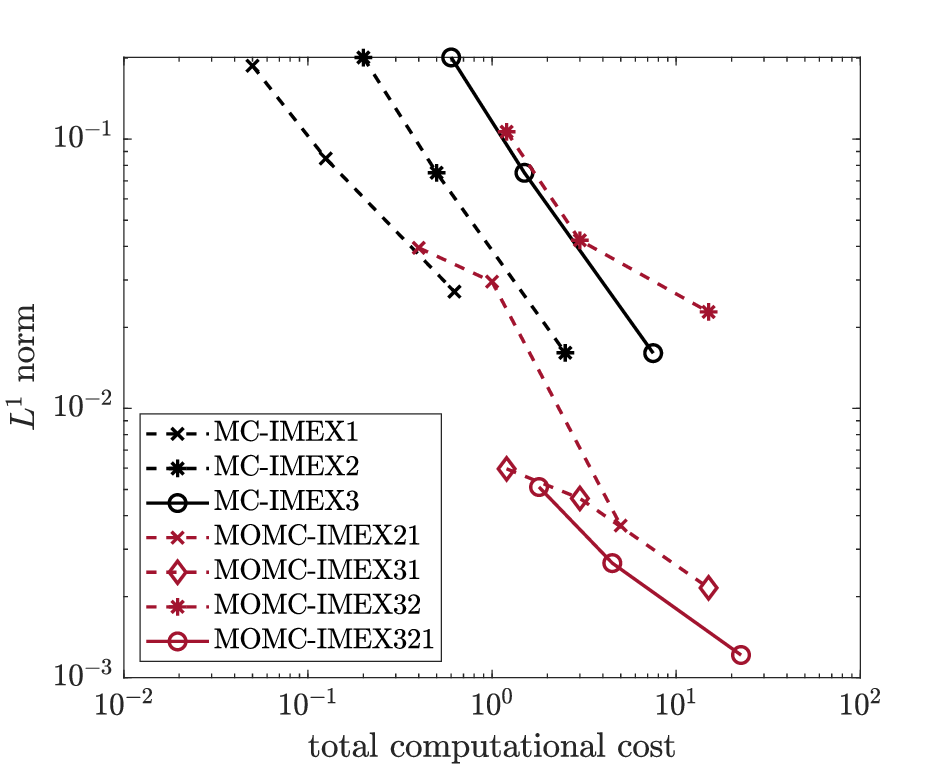}
	\caption{Blood flow model, Test 1. $L^1$-norm error in the expectation of the variable $p$ with respect to the number of samples $M$ used in the standard Monte Carlo (MC) methods and in the $L$-th level of the different Multi-Order MC (MOMC) methods used (left) and with respect to the total computational cost (right).}
	\label{test1_BF}
\end{figure}

In the first test case, we consider a sinusoidal initial distribution of the variables in the domain $\mathcal{D} = [0,1]$, 
\begin{subequations}
	\begin{align*}
		A(x,0) &= 0.0005 + 0.0001 \sin(2\pi x)\,,\\
		q(x,0) &= 0.00005 \,,\\
		p(x,0) &= 15000+ 5000 \sin(2\pi x)\,,\\
		A_0(x,0) &= 0.0005 + 0.0001 \sin(2\pi x)\,,\\
		p_0(x,0) &= 5000 + 500\cos(2\pi x)\,,\\
		E_0(x,0) &= 1000000 + 100000 \sin(2\pi x)\,,\\
		E_{\infty}(x,0) &= 800000 + 100000 \sin(2\pi x)\,,
	\end{align*}
\end{subequations}
accounting for an uncertain input $z$ affecting the viscosity coefficient of the vessel wall $\eta $ and, consequently, the relaxation time $\tau_r(z) = \frac{\eta(z)}{E_0} \left(1 - \frac{E_{\infty}}{E_0}\right)$, being $\eta(z) = 5\cdot10^{5}(1+z)$, $z \sim \mathcal{U}(-1,1)$. The wall thickness is set to be $h= 0.0015$ and the blood density $\rho = 1050$. We discretize the physical domain with 50 computational cells and a time step resulting from the classical CFL stability condition, not influenced by the smallness of the relaxation time thanks to the IMEX discretization, setting $\mathrm{CFL} = 0.9$. The final time of the simulation is $t_{end} = 0.1$.

In Figure \ref{test1_BF_sol} we show a reference solution for the variables $q$ and $p$ computed with the standard MC method with the chosen AP-IMEX Runge–Kutta Finite Volume solver of order 3 using $M=100$ stochastic samples.

In Figure \ref{test1_BF}, we compare the $L^1$-norm convergence curves for the expectation of the solution of $p$ obtained using the same MOMC configurations as in the previous tests with those obtained using the standard MC methods. The curves are shown with respect to the number of samples $M$ used in the standard MC methods and at the $L$-th level of the different MOMC methods, as well as with respect to the total computational cost. Once again, we observe that the various MOMC techniques demonstrate clear advantages over the standard MC methods.

\subsubsection{Test 2: Bi-fidelity AP-MOMC}
In the second test case, we consider the same initial distributions of the variables defined for Test 1, but this time accounting for an uncertain input $z$ affecting the initial distribution of cross-sectional areas as follows:
\begin{subequations}
	\begin{align*}
		A(x,0,z) &= 0.0005 + a(z) \sin(2\pi x)\,,\\
		A_0(x,0,z) &= 0.0005 + a(z) \sin(2\pi x)\,,
	\end{align*}
\end{subequations}
with $a(z) = 1+0.5z$, $z\sim \mathcal{U}(-1,1)$. The viscosity coefficient, this time, is set to be $\eta = 5\cdot10^5$, while the rest of the parameters are left as in Test 1.

In Figure \ref{test2_BF_sol}, we present a reference solution for the variables $A$ and $p$, computed using the standard MC method with the chosen AP-IMEX Finite Volume solver of order 3 and $M = 100$ stochastic samples.

For this test case, we exploit the asymptotic limit \eqref{completesyst_elastic} of the governing system of equations to evaluate the performance of the bi-fidelity  AP-MOMC technique discussed in Section \ref{AP-MOMC}. Specifically, we construct a bi-fidelity AP-MOMC method by incorporating the low-order reduced elastic system, solved with the first order AP-IMEX scheme, as the most inexpensive surrogate of the full-order model \eqref{completesyst_dimensionless}, thereby building a hierarchy of four levels in total.

\begin{figure}[b!]
	\centering
	\includegraphics[width=0.48\textwidth]{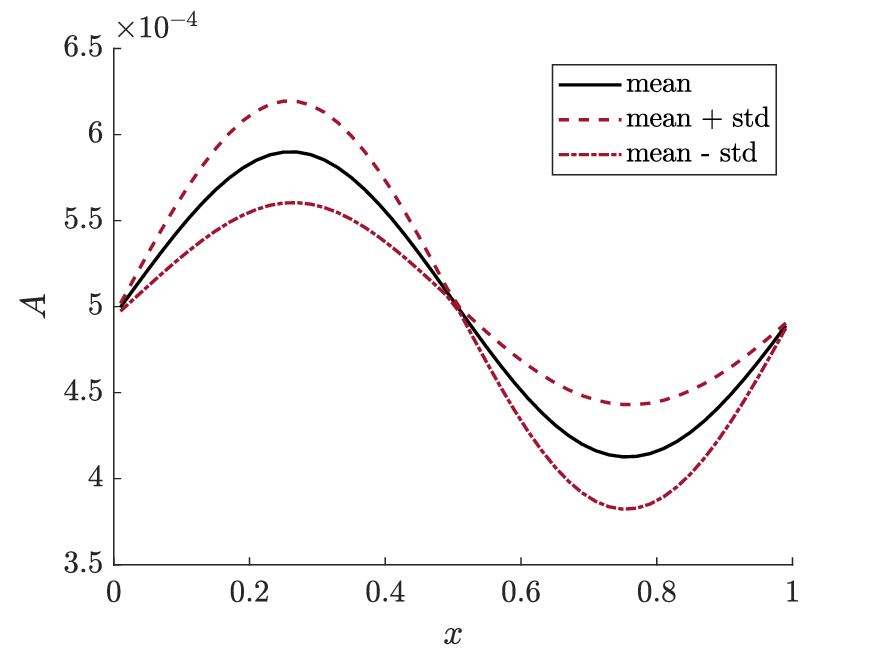}
	\includegraphics[width=0.48\textwidth]{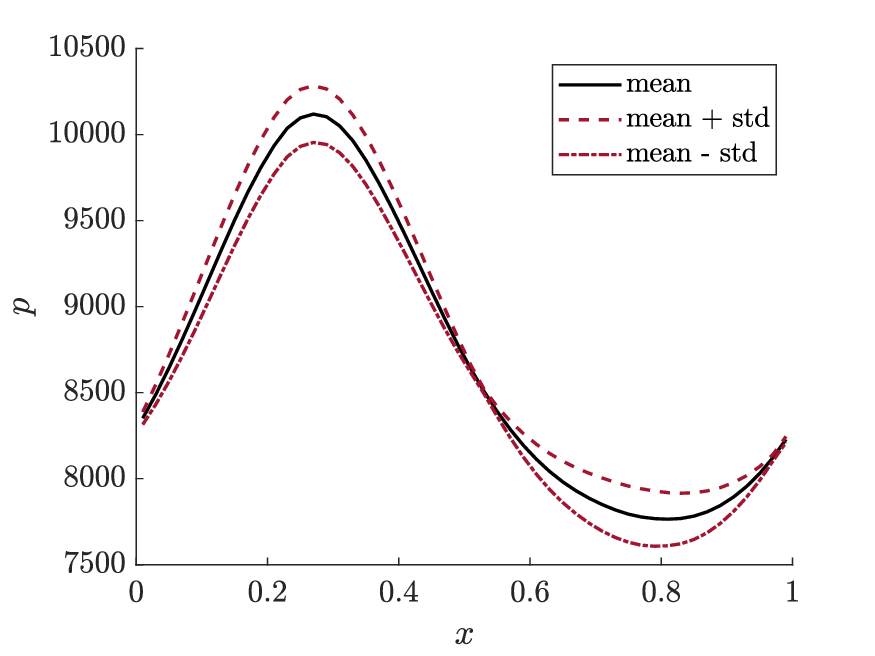}
	\caption{Blood flow model, Test 2. Reference solution at the final time of cross-sectional area $A$ (left) and pressure $p$ (right).}
	\label{test2_BF_sol}
\end{figure}
\begin{figure}[t!]
	\centering
	\includegraphics[width=0.48\textwidth]{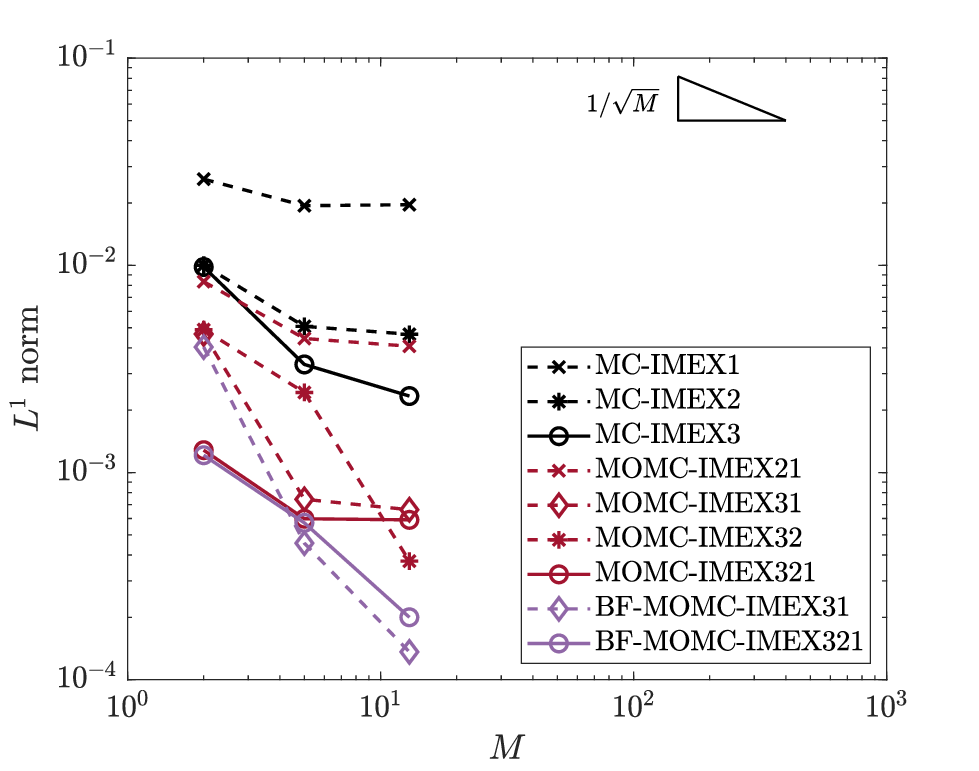}
	\includegraphics[width=0.48\textwidth]{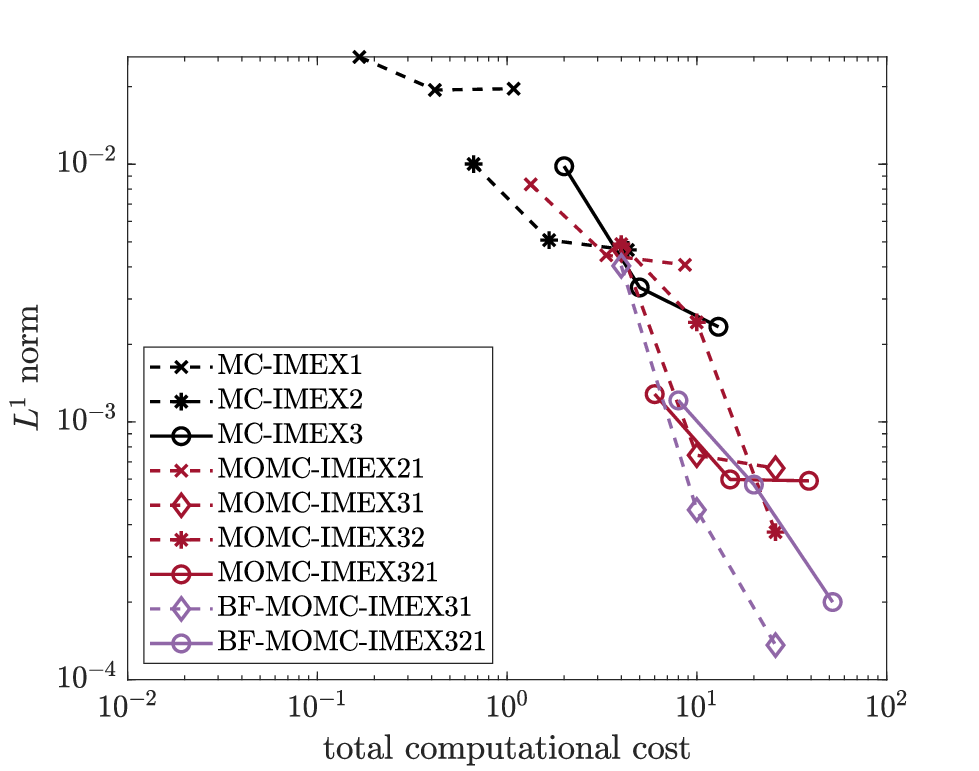}
	\caption{Blood flow model, Test 2. $L^1$-norm error in the expectation of the variable $p$ with respect to the number of samples $M$ used in the standard Monte Carlo (MC) methods and in the $L$-th level of the different Multi-Order MC (MOMC) methods used (left) and with respect to the total computational cost (right).}
	\label{test2_BF}
\end{figure}

Results are shown in Figure \ref{test2_BF} in terms of convergence curves for the expectation of the pressure wave. These plots highlight the improved performance achieved with the bi-fidelity version of the MOMC (BF-MOMC-IMEX321 in the Figure) by incorporating the low-order model within the hierarchical chain of levels of the standard MOMC approach, particularly when targeting small error tolerances.

\section{Conclusion}
In this work, we introduced a novel Multi-Order Monte Carlo (MOMC) framework for uncertainty quantification in multiscale systems governed by time-dependent PDEs. By constructing hierarchies from asymptotic-preserving (AP) IMEX Runge–Kutta finite volume schemes of varying orders, the method achieves significant variance reduction while preserving uniform accuracy across scales.

Unlike traditional Multi-Level Monte Carlo methods, which rely on nested meshes and can incur significant computational overhead, the MOMC strategy builds its hierarchy through different discretization orders, avoiding mesh refinements. This allows for seamless integration into complex computational domains.

We further extended the AP-MOMC framework with a bi-fidelity strategy, in which asymptotic model scalings are incorporated as additional levels in the hierarchy. This enrichment enables more efficient simulations of stiff multiscale systems while ensuring that critical asymptotic properties are preserved at the discrete level.

Various numerical experiments on the inviscid Burgers' equation, the shallow water equations, and a multiscale blood flow model demonstrate the effectiveness of the proposed methods. Across all test cases, the MOMC and AP-MOMC approaches consistently outperform standard Monte Carlo methods in terms of both accuracy and computational cost, particularly when targeting low error tolerances. The introduction of reduced-order models within the hierarchy, in the bi-fidelity AP-MOMC framework, further enhances performance without compromising the consistency or robustness of the solution.

As a direction for future work, we aim to extend the proposed methodology to multi-dimensional problems on unstructured grids, enabling its application to more general computational geometries and increasing its relevance for real-world simulations involving complex domains. In this context, we aim to further analyze the optimal selection of the number of samples at each level of the methodology, with the goal of improving the method’s overall effectiveness.
Aware of the limitations of the present proposal, which certainly requires further investigation to refine its individual components, we believe that the MOMC frameworks proposed here already provide powerful and versatile tools for uncertainty quantification in multiscale PDE systems, offering a promising foundation for future research and practical applications in complex engineering and biological settings.

\section*{Acknowledgments}
This work has been written within the activities of the GNCS group of INdAM (Italian National Institute of High Mathematics), whose support is acknowledged. It has also been partially supported by ICSC – Centro Nazionale di Ricerca in High Performance Computing, Big Data and Quantum Computing, funded by European Union–NextGenerationEU. WB and LP also acknowledge the financial support from the International Centre for Mathematical Sciences (Universities of Edinburgh and Heriot-Watt) under the program Research-In-Groups 2025.

GB has been funded by the European Union–NextGenerationEU, MUR PRIN 2022 PNRR Project No. P2022JC95T. WB received financial support by the Italian Ministry of University (MUR) with the PRIN Project 2022 No. 2022N9BM3N. LP has been supported by the Royal Society under the Wolfson Fellowship “Uncertainty quantification, data-driven simulations and learning of multiscale complex systems governed by PDEs” and by MUR PRIN 2022 Project No. 2022KKJP4X.


\bibliographystyle{abbrv}
\bibliography{library}

\end{document}